\documentclass[preprint]{elsarticle} 
\usepackage{fullpage,amsmath,amssymb,soul}
\usepackage{geometry}

\usepackage{mathrsfs}
\usepackage[colorlinks,citecolor=red]{hyperref}
\usepackage[linesnumbered, vlined,noline, ruled]{algorithm2e}
\usepackage{enumerate}
\usepackage{pdfpages}

\usepackage{float}

\usepackage{setspace}
\usepackage{placeins}
\usepackage{soul,tikz,amsfonts, graphicx}
\usepackage{booktabs}
\usepackage{caption}
\usepackage{bm}
\usepackage{cases}
\usepackage{subfigure}

\usepackage{enumitem}

\usetikzlibrary{shapes,arrows,shapes.multipart}

\usepackage{amsthm}
\newtheorem{theorem}{Theorem}[section]
\newtheorem{remark}{Remark}[section]

\newtheorem{lemma}{Lemma}[section]

\newcommand{\HH}{\mathbb{H}}
\newcommand{\LL}{\mathcal{L}}

\graphicspath{{figs/}}

\begin{document}

\begin{frontmatter}
  \title{Isogeometric analysis for the Helmholtz transmission eigenvalue problem} 
  \author[address1]{Nizheng Liao}\ead{yc37435@um.edu.mo}
\author[address2,address3]{Guanghui Hu}\ead{garyhu@um.edu.mo}
 \author[address4,address5]{Xucheng Meng\corref{cor2}}\ead{xcmeng@bnu.edu.cn}
  \cortext[cor2]{Corresponding author}
\address[address1]{Department of Mathematics, University of Macau, Macao S.A.R., China} 
  \address[address2]{Department of Mathematics \& Guangdong-Hong Kong-Macao Joint Laboratory for Data-Driven Fluid Mechanics and Engineering Applications, University of Macau, Macao S.A.R., China}
\address[address3]{Zhuhai UM Science and Technology Research Institute, Zhuhai, Guangdong, China}
\address[address4]{Faculty of Arts and Sciences, Beijing Normal University, Zhuhai 519087, Guangdong, China}
\address[address5]{Guangdong Provincial Key Laboratory of Interdisciplinary Research and Application for Data Science,  Beijing Normal-Hong Kong Baptist University, Zhuhai 519087, China}

\begin{abstract}
The transmission eigenvalue problem plays an increasingly important role in inverse scattering theory. Although significant progress has been made in developing efficient numerical methods for the problem over the last two decades, its numerical treatment for curved domains in $\mathbb{R}^d$ ($d=2,3$) remains challenging. In this paper, we introduce and analyze a geometrically flexible and $H^2$-conforming isogeometric method for solving a fourth-order, quadratic and non-self-adjoint eigenvalue problem arising from the Helmholtz transmission eigenvalue problem. Using the spectral approximation theory for compact non-self-adjoint operators, 
we derive optimal error estimates for the discrete eigenvalues and eigenfunctions. Numerical results for the two- and three-dimensional benchmark problems, including those defined in curved domains, are presented to verify our theoretical results, and to demonstrate the advantages of the method over existing numerical methods in terms of both accuracy and geometric flexibility.
\end{abstract}
\begin{keyword}
Transmission eigenvalue problem, isogeometric analysis, spectral approximation, error estimates.
    %; Geometry-Independent Field approximaTion (GIFT) scheme
\end{keyword}

\end{frontmatter}

\section{Introduction}

% [Significance of transmission eigenvalue problem]

% [Existing works and their drawbacks]

% [Highlights of IGA method]

% [Novelty of IGA method]

Since the transmission eigenvalues can be determined from scattering data and used to estimate the physical properties of scattering objects \cite{CAKONI2010379,Cakoni_2007, Giorgi_2012, Sun_2011_a}, the transmission eigenvalue problem, initially introduced by Colton and Monk \cite{COLTON_MONK1988} and Kirsch \cite{KIRSCH1986}, plays an increasingly important role in inverse scattering theory for inhomogeneous media \cite{Cakoni2022,CakoniHaddar2013}. It has a variety of applications in inverse problems,  such as target identification, nondestructive testing \cite{Cakoni_2008,CakoniHaddar2013}, and the design of invisible material \cite{JI_LIU_2018}. The problem is a non-self-adjoint eigenvalue problem that is not covered by the standard theory of elliptic eigenvalue problems \cite{CakoniHaddar2013}, making it difficult to study analytically. Therefore, the numerical method is an indispensable tool to estimate the transmission eigenvalues.

% In this paper, we consider the 

% Results on the existence of transmission
% eigenvalues and related applications can be found in
% \cite{RynneSleeman1991,ColtonPaivarintaSylvester2007,PaivarintaSylvester2008,CakoniGintidesHaddar2010,CakoniHaddar2013}.

The first numerical study of the transmission eigenvalue problem  was initiated by Colton et al. \cite{colton2010analytical} in 2010, where three finite element methods were proposed. Since then,
the development of robust and efficient numerical methods for the  problem has received considerable research interest, see, e.g., \cite{Sun2011_SIAM,cakoni2014error,ji2014multigrid,yang2016mixed,HAN201796,Mora2018,XiJiZhang_2020,mora2021virtual,MengMeiM3AS_2022}.
% and various numerical methods have been proposed for solving it. 
To facilitate theoretical analysis and numerical treatment, the problem is usually reformulated into an equivalent  fourth-order quadratic eigenvalue problem \cite{Rynne_1991_SIAM,colton2010analytical}. Different numerical methods have been proposed to solve the transmission eigenvalue problem based on this fourth-order eigenvalue problem. For instance,  the $H^2$-conforming finite element method (FEM) using either Argyris element or Bogner–Fox–Schmit (BFS) element has been considered, see, e.g.,  \cite{Sun2011_SIAM,cakoni2014error,ji2014multigrid,HAN201796}. However, the construction of $C^1$ finite elements is difficult in general, especially for the three-dimensional problems \cite{ciarlet2002finite,brenner08,HU_C_r_2023}. As alternatives to the $C^1$ finite elements, the mixed FEM \cite{colton2010analytical,Ji_2012,yang2016mixed,XiJiZhang_2020}, and some nonstandard finite element methods, such as the non-conforming FEM \cite{yang2016non,ji2017nonconforming,xi2020high,YANG2020112697}, the discontinuous Galerkin method \cite{geng2016c,meng2023discontinuous,wang2023mixed_DG}, and a $C^0$ linear FEM using gradient
recovery operator \cite{chen2017linear},  have been successfully developed to compute the transmission eigenvalues. Moreover, the $C^1$ virtual element method \cite{Mora2018,mora2021virtual,MengMeiM3AS_2022}, the mixed virtual element method \cite{meng2023mixed}, and the  spectral methods \cite{an2013spectral,an2023novel,Tan_An_2019,TanCao_2025} have also  been developed to solve the  problem.        
% Sun \cite{Sun2011_SIAM} proposed two iterative methods (bisection and secant) to compute the transmission eigenvalues, where the $H^2$ conforming finite element method (FEM) with Argyris element is used for discretization. Cakoni et al. \cite{cakoni2014error} also considered the use of $C^1$ FEM with Argyris element, and proved the error estimates for the eigenvalues using the Osborn’s perturbation theory for the compact non-self-adjoint operators. To improve   

Despite the excellent performance of the aforementioned numerical methods for the transmission eigenvalue problems in polytopal domains of~$\mathbb{R}^d$  ($d=2,3$), their accuracy may be degraded in domains with curved boundaries, see, e.g., \cite{cakoni2014error,yang2016mixed}. For some special curved domains, such as disks, spherical and cylindrical domains, the spectral methods have been studied to achieve spectral convergence, see, e.g. \cite{an2023novel,Tan_An_2019,TanCao_2025}. However, the extension to general curved domains remains nontrivial.

% In this paper, we introduce and analyze  isogeometric analysis (IGA) \cite{hughes2005isogeometric,IGA_book} to solve the fourth-order transmission eigenvalue problem in \textit{complex domains} of $\mathbb{R}^d$ ($d=2,3$). 
% We note that isogeometric analysis (IGA), proposed by Hughes et al.  \cite{hughes2005isogeometric} with the aim to bridge the gap between finite element analysis and computer-aided design (CAD), is geometrically flexible for curved domains. 

We note that isogeometric analysis (IGA), introduced by Hughes et al. \cite{hughes2005isogeometric} with the aim of bridging the gap between finite element analysis and computer-aided design (CAD), offers geometric flexibility for curved domains.
As a generalization of FEM, IGA employs the spline basis functions (e.g., Non-Uniform Rational B-Splines, NURBS) that represent the CAD geometry to  construct the solution approximation space. It has several distinguished advantages over the classical FEM: (1) it is  easy to construct globally $C^k$-continuous ($k\ge 1$) basis functions, (2) it  uses the exact geometry of CAD,  and (3) it is flexible for the $h$-, $p$-, and $k$-refinements. Consequently, the NURBS-based IGA has been successfully applied to a wide range of scientific and engineering problems, such as structural vibration problems \cite{COTTRELL20065257}, fluid dynamics \cite{NIELSEN20113242}, and we refer to \cite{IGA_book,NGUYEN201589} and the references therein for the details. Furthermore, IGA has also been utilized to solve high-order partial differential equations, such as the biharmonic and triharmonic problems \cite{TAGLIABUE2014277}, and the Cahn-Hilliard equation \cite{meng2025convergence,BARTEZZAGHI2015446,GOMEZ20084333}. % Therefore, IGA is well suited for solving the transmission eigenvalue problems in \emph{curved domains}.   

In this paper, we introduce and analyze the Geometry-Independent Field approximation (GIFT) scheme~\cite{Atroshchenko2018}, which is an extension of the NURBS-based IGA, for solving the fourth-order transmission eigenvalue problem on bounded domains of $\mathbb{R}^d$ ($d=2,3$) with curved boundaries. The GIFT scheme employs different spline basis functions for the geometric representation and for the solution approximation,  thereby facilitating  the computer implementation and enhancing computational efficiency \cite{Atroshchenko2018}.
To derive the optimal error estimates for the transmission eigenvalues and eigenfunctions within the framework of the GIFT scheme,  we first use the linearization technique proposed in \cite{yang2015error} to transform the quadratic fourth-order eigenvalue problem into an equivalent linear eigenvalue problem, whose weak solution lies in $H_0^2(\Omega)\times L^2(\Omega)$.  We then make use of the capability of IGA to build highly smooth 
basis functions and develop the $H_0^2(\Omega)\times L^2(\Omega)$ conforming discrete approximation space. Then, we introduce the continuous/discrete solution operator whose spectrum is related to the solution of the continuous/discrete variational formulation for the fourth-order transmission eigenvalue problem, and derive % Following the proof for the NURBS approximation result \cite{Bazilevs2006} and using the
% Banach-space interpolation theory \cite{brenner08}, 
the error between the continuous and discrete solution operators. Finally, by employing the spectral approximation theory for compact non-self adjoint operators \cite{osborn1975spectral,BABUSKA1991641}, we derive the optimal order error estimates for the eigenvalues and eigenfunctions. We  remark that the linearization technique used here  has been used by several different numerical methods to establish the error estimates of the discrete transmission eigenvalues, such as the  virtual element method \cite{mora2021virtual},  the
mixed  virtual element method \cite{meng2023mixed}, the mixed FEM \cite{yang2016mixed}, and the non-conforming FEM \cite{yang2016non, YANG2020112697}.  Compared with these methods, the GIFT scheme is 
more geometrically flexible.

The rest of this paper is organized as follows. In Section \ref{sec:Model_problem}, we derive the linear  formulation of the Helmholtz transmission eigenvalue problem, and introduce the associated solution operators. In Section \ref{sec:IGA_transmission_eigenvalues}, we review the definitions of B-splines and NURBS, and then present the GIFT scheme for the transmission eigenvalue problem. We prove the optimal error estimates for the eigenvalues and eigenfunctions in Section \ref{sec:error_estimate}. In Section \ref{sec:examples}, we present several two- and three-dimensional numerical examples to verify the theoretical results. Finally, the conclusion is drawn in Section \ref{sec-conclusion}.

\section{The linear formulation of the transmission eigenvalue problem} \label{sec:Model_problem}

In what follows, we assume that
$\Omega\subset\mathbb{R}^d$ $(d=2,3)$ is a bounded open domain with
Lipschitz continuous boundary $\partial\Omega$, and $\bm \nu$ is the unit outward normal vector to
$\partial\Omega$. For the Sobolev space $H^k(\Omega)$ ($k\ge 0$), the norm and inner product are denoted by $\| \cdot\|_{k,\Omega}$ and $(\cdot, \cdot)_k$, respectively. 
Let $X$ and $Y$ be normed spaces, the set of all bounded linear operators $T: X \to Y$ is denoted by $\mathcal{L}(X,Y)$, and the operator norm of $T$ is denoted by $\| T \|_{\mathcal{L}(X,Y)}$. We write $\mathcal{L}(X) := \mathcal{L}(X,X)$ for brevity.

% In particular, we denote the inner product and norm over the Hilbert space $L^2(\Omega) = H^0(\Omega)$ by $(\cdot,\cdot)_0$ and $\| \cdot \|$, respectively. 

We consider the following Helmholtz transmission
eigenvalue problem \cite{CakoniHaddar2013,colton2010analytical,Mora2018,mora2021virtual}: Find the transmission eigenvalue
$k\in\mathbb{C}$ and a nontrivial pair $(u_1,u_2)\in L^2(\Omega) \times L^2(\Omega)$, with
$u_1-u_2\in H^2(\Omega)$, such that
\begin{equation}
\displaystyle
\label{eq:transmission_eigenvalues_eqs}
\begin{cases}
\Delta u_1+k^2 n u_1  & = 0   \quad \mbox{ in } \Omega, \\
\Delta u_2+k^2 u_2    & = 0   \quad \mbox{ in } \Omega,  \\
u_1-u_2  & = 0   \quad \mbox{ on }  \partial\Omega,  \\
\displaystyle \frac{\partial u_1}{\partial \bm \nu}-\frac{\partial u_2}{\partial \bm \nu}
 & =0   \quad \mbox{ on } \partial\Omega,  
\end{cases}
\end{equation}
where $n:=n(\bm{x})\in L^\infty(\Omega)$ is the index of refraction, satisfying 
\[
n(\bm x)-1 \ge \alpha  \quad \mbox{a.e.} \quad \mbox{in} \quad \Omega,
\]
for some constant $\alpha>0$. The following theoretical analysis also holds, with obvious modifications, when $n$ is
strictly less than one. 
% A nonzero wave number $k$ for which
% \eqref{eq:transmission_eigenvalues_eqs} admits a nontrivial solution is called a
% transmission eigenvalue. The resulting spectral problem is nonlinear,
% non-self-adjoint, and not covered by the standard theory for self-adjoint elliptic
% eigenvalue problems \cite{ColtonMonkSun2010,CakoniHaddar2013}. 
% For numerical
% approximation, the coupled second-order system is often reduced to a fourth-order
% equation. 

Following \cite{Rynne_1991_SIAM,colton2010analytical,ji2017nonconforming}, we set $u=u_1-u_2\in H_0^2(\Omega)$, where 
\[
H_0^2(\Omega) = \{ u\in H^2(\Omega), u = \frac{\partial u}{\partial \bm \nu} =0 \,\,\mbox{ on }\,\, \partial \Omega \},
\]
the coupled second-order system  \eqref{eq:transmission_eigenvalues_eqs} can be reformulated as  a fourth-order eigenvalue problem: Find $k\in \mathbb{C}$ and a nontrivial $u$ such that
\begin{equation}\label{eq:fourth_order_eigenvalue}
\begin{cases}
\displaystyle \left(\Delta+k^2 n\right)
\left[\frac{1}{n-1}\left(\Delta+k^2\right)u\right] =0 \quad & \mbox{ in } \Omega, \\
\displaystyle u = \frac{\partial u}{\partial \bm \nu}  = 0 \quad & \mbox{ on } \partial\Omega.
\end{cases}
\end{equation}
% This fourth-order form gives a
% variational eigenvalue problem in $H_0^2(\Omega)$, but conforming discretizations must
% provide globally $C^1$ approximation spaces, and the resulting eigenvalue problem
% remains quadratic and non-self-adjoint.

Multiplying the governing equation \eqref{eq:fourth_order_eigenvalue} by a test function $v\in H_{0}^2(\Omega)$, and applying Green's formula, we obtain the following continuous variational formulation of the fourth-order eigenvalue problem \eqref{eq:fourth_order_eigenvalue}: Find $k\in \mathbb{C}$ and $u\in H_0^2(\Omega)$ with $u\ne 0$, such that
\begin{equation}\label{eq:fourth_eigen}
    \Big( \frac{1}{n-1}\Delta u, \Delta v \Big)_0 + k^2 \Big[ \Big(\frac{n}{n-1}  \Delta u, v\Big)_{0}    + \Big( \frac{1}{n-1}u, \Delta  v\Big)_{0} \Big] + k^4 \Big( \frac{n}{n-1}u,  v \Big)_{0}  = 0  \quad \forall v \in H_0^2(\Omega).
\end{equation}
It is easy to see that $k=0$ is not an eigenvalue of \eqref{eq:fourth_eigen}.
% where $\overline{v}$ is the complex conjugate of $v$. 

If we set $\lambda := -k^2$, then the variational problem \eqref{eq:fourth_eigen} is a quadratic eigenvalue problem with respect to $\lambda$, that is, we have
\begin{equation}\label{eq:quadratic_fourth_eigen}
    \Big( \frac{1}{n-1}\Delta u, \Delta v \Big)_0 = \lambda \Big[ \Big(\frac{n}{n-1}  \Delta u, v\Big)_{0}    + \Big( \frac{1}{n-1}u, \Delta  v\Big)_{0} \Big] - \lambda^2 \Big( \frac{n}{n-1}u,  v \Big)_{0}    \qquad \forall v \in H_0^2(\Omega).
\end{equation}

To linearize the quadratic eigenvalue problem \eqref{eq:quadratic_fourth_eigen}, we utilize the linearization technique proposed in \cite{yang2015error}. % to transform \eqref{eq:quadratic_fourth_eigen} into a linear eigenvalue problem. 
To this end, we introduce an auxiliary variable
\begin{equation}\label{eq:z_k_2_u}
    % z := k^2 u = -\lambda u~~ \mbox{in} ~~ \Omega,
    z :=  -\lambda u \qquad \mbox{in} ~~ \Omega,
\end{equation}
and consequently, we have
\begin{equation}\label{eq:z_u_weak_form}
    (z,w)_{0} = -\lambda (u,w)_{0}~~ \quad \forall w\in L^2(\Omega). 
\end{equation}
We define  the product space $\mathbb{H} := H_0^2(\Omega)\times L^2(\Omega)$, endowed with the product norm
\[
\| (u,z) \|_{\mathbb{H}} := \big(\, \|u\|_{2,\Omega}^2 + \|z\|_{0, \Omega}^2 \big)^{1/2}   \qquad \forall (u,z)\in \mathbb{H}.
\]
Combining \eqref{eq:quadratic_fourth_eigen}, \eqref{eq:z_k_2_u}, and \eqref{eq:z_u_weak_form}, we obtain the following linear eigenvalue formulation: Find $\big( \lambda, (u,z) \big)\in \mathbb{C}\times \mathbb{H}$ with $\lambda \ne 0$ and $u\ne 0$ such that 
\begin{equation}\label{eq:linear_eigenvalue_problem}
    A\big( (u,z),(v,w)   \big) = \lambda B( (u,z),(v,w)  )  ~~\quad \forall (v,w)\in \mathbb{H}, 
\end{equation}
where $A(\cdot, \cdot ): \mathbb{H}\times \mathbb{H} \to \mathbb{C}$, and $B(\cdot, \cdot ): \mathbb{H}\times \mathbb{H} \to \mathbb{C}$ are sesquilinear forms, defined by
\begin{equation}
    A\big( (u,z),(v,w)   \big) := \Big( \frac{1}{n-1}\Delta u, \Delta v \Big)_0 + (z,w)_0, 
\end{equation}
and
\begin{equation}
    B\big( (u,z),(v,w)   \big) := \Big(\frac{n}{n-1}  \Delta u, v\Big)_0    + \Big( \frac{1}{n-1}u, \Delta  v\Big)_0 + \Big( \frac{n}{n-1}z,  v \Big)_0 - (u,w)_0.
\end{equation}

% Several different numerical methods have been proposed for solving the variational formulation \eqref{eq:fourth_eigen} based on the linear eigenvalue problem \eqref{eq:linear_eigenvalue_problem}, including the $C^1$-$C^0$ conforming FEM~\cite{yang2015error}, the mixed FEM~\cite{yang2016mixed}, the $C^0$ interior penalty Galerkin ($C^0$IPG) FEM~\cite{YANG201771}, the non-conforming FEM~\cite{yang2016non,YANG2020112697}, the $C^1$-conforming virtual element method~\cite{mora2021virtual}, and the mixed virtual element method~\cite{meng2023mixed}. Although these numerical methods perform well on polytopal domains, they generally fail to deliver high-order accuracy on domains with a curved boundary without a specialized boundary approximation, see, e.g., \cite{yang2016mixed,cakoni2014error}. In this paper, we introduce and analyze a geometrically flexible and $H^2$-conforming isogeometric discretization for the linear eigenvalue problem \eqref{eq:linear_eigenvalue_problem}. 

Regarding the properties of the sesquilinear forms $A(\cdot, \cdot)$ and $B(\cdot, \cdot)$, we have the following lemma, and we refer to  \cite{yang2015error,mora2021virtual} for the details.

\begin{lemma}\label{lemma_A_B} 
There exist positive constants $\beta$ and $C$ depending on the index of
refraction $n$ such that
\begin{equation}
\begin{aligned}
&A\big( (u,z), (u,z) \big)\ge \beta \, \| (u,z) \|_{\mathbb{H}}^2, \\
&|A\big( (u,z), (v,w) \big)|\le C \| (u,z) \|_{\mathbb{H}} \| (v,w) \|_{\mathbb{H}},\\
&|B\big( (u,z), (v,w) \big)|\le C \| (u,z) \|_{\mathbb{H}} \| (v,w) \|_{\mathbb{H}},
    \end{aligned}
\end{equation}
for all $(u,z)$,   $(v,w)\in \mathbb{H}$. Moreover, we have $ A\big( (u,z), (v,w) \big) = \overline{A\big( (v,w), (u,z) \big)}$, where $\overline{ A(\cdot,\cdot)}$ is the complex conjugate of  $A(\cdot,\cdot)$.
\end{lemma}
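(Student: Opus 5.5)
The proof is a direct verification from the definitions of $A$ and $B$. The one nontrivial ingredient is that on $H_0^2(\Omega)$ the seminorm $\|\Delta u\|_{0,\Omega}$ is equivalent to the full norm $\|u\|_{2,\Omega}$. Throughout we use the bounds on $n$. Since $n-1\ge\alpha$ a.e. and $n\in L^\infty(\Omega)$, the coefficients satisfy
\[
0<\frac{1}{\|n\|_{L^\infty}+1}\le \frac{1}{n-1}\le \frac{1}{\alpha},\qquad \Big|\frac{n}{n-1}\Big|\le \frac{\|n\|_{L^\infty}}{\alpha}.
\]
So all weights in $A$ and $B$ are bounded measurable functions, and the weight $1/(n-1)$ is real and bounded below by a positive constant.

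\textbf{Step 1: Hermitian symmetry.} The weight $1/(n-1)$ is real-valued. Hence
\[
\Big(\tfrac{1}{n-1}\Delta u,\Delta v\Big)_0=\overline{\Big(\tfrac{1}{n-1}\Delta v,\Delta u\Big)_0}\quad\text{and}\quad (z,w)_0=\overline{(w,z)_0},
\]
which gives $A((u,z),(v,w))=\overline{A((v,w),(u,z))}$ directly. In particular $A((u,z),(u,z))$ is real, so the coercivity inequality makes sense.

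\textbf{Step 2: Coercivity.} We have
\[
A((u,z),(u,z))\ge \frac{1}{\|n\|_{L^\infty}+1}\|\Delta u\|_{0,\Omega}^2+\|z\|_{0,\Omega}^2 .
\]
The key step is then the estimate $\|u\|_{2,\Omega}\le C\|\Delta u\|_{0,\Omega}$ for all $u\in H_0^2(\Omega)$. We prove it first for $u\in C_c^\infty(\Omega)$: integrating by parts twice gives $\|\Delta u\|_{0}^2=\sum_{i,j}\|\partial_{ij}u\|_{0}^2=|u|_{2}^2$, and density extends this to $H_0^2(\Omega)$. This needs no boundary regularity beyond what is already assumed, so no convexity is required. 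Two applications of the Poincaré inequality for $H_0^1$ (applied to $u$ and to each $\partial_i u$, both of which lie in $H_0^1(\Omega)$) then bound $\|u\|_{0}$ and $|u|_{1}$ by $|u|_{2}$. Combining these, $\beta=\min\{1,(C^2(\|n\|_{L^\infty}+1))^{-1}\}$ works. I expect this norm-equivalence step to be the only point requiring care, because it is where $H_0^2$, rather than merely $H^2$, is essential.

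\textbf{Step 3: Continuity of $A$ and $B$.} We apply Cauchy--Schwarz term by term, together with $\|\Delta u\|_0\le\sqrt d\,\|u\|_{2}$ and the coefficient bounds above. For $A$ this gives
\[
|A|\le \alpha^{-1}\|\Delta u\|_0\|\Delta v\|_0+\|z\|_0\|w\|_0 .
\]
For $B$, each of the four terms pairs a quantity controlled by $\|u\|_2$ or $\|z\|_0$ with one controlled by $\|v\|_2$ or $\|w\|_0$. This gives
\[
|B|\le C(n)\big(\|u\|_2\|v\|_2+\|u\|_2\|v\|_2+\|z\|_0\|v\|_2+\|u\|_2\|w\|_0\big).
\]
Each product on the right is bounded by $\|(u,z)\|_{\mathbb H}\|(v,w)\|_{\mathbb H}$, which yields the stated inequality with $C$ depending only on $\alpha$, $\|n\|_{L^\infty}$ and $d$.
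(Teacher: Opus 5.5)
Your proof is correct. It is the standard argument that the paper does not reproduce but defers to \cite{yang2015error,mora2021virtual}: Hermitian symmetry from the real weight $1/(n-1)$, coercivity from the positive lower bound on $1/(n-1)$ combined with the equivalence of $\|\Delta u\|_{0,\Omega}$ and $\|u\|_{2,\Omega}$ on $H_0^2(\Omega)$, and continuity by Cauchy--Schwarz. Your density step tacitly identifies the paper's trace definition of $H_0^2(\Omega)$ with the $H^2$-closure of $C_c^\infty(\Omega)$, which is standard for Lipschitz domains.
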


\begin{remark}
It follows from Lemma \ref{lemma_A_B} that the sesquilinear form $A(\cdot,\cdot)$ defines an inner product on $\mathbb{H}$.
\end{remark}

% which is equivalent to the standard

The source problem associated with the linear eigenvalue problem \eqref{eq:linear_eigenvalue_problem} reads: For any given $(f,g)\in \mathbb{H}$, find $(\widetilde{f}, \widetilde{g})\in \mathbb{H}$ such that 
\begin{equation}\label{eq:source_problem}
 A\big( (\widetilde{f},\widetilde{g}),(v,w)   \big) =  B\big( (f,g),(v,w)  \big)  ~~ \quad \forall (v,w)\in \mathbb{H}.    
\end{equation}

According to Lemma \ref{lemma_A_B} and the Lax-Milgram theorem, the source problem \eqref{eq:source_problem} admits a unique solution $(\widetilde{f},\widetilde{g})\in \mathbb{H}$. Therefore, for any given $(f,g)\in  \mathbb{H} $, we define the solution operator $T: \mathbb{H} \to \mathbb{H}$ by
\begin{equation}
    T(f,g) = (\widetilde{f}, \widetilde{g}),
\end{equation}
where $(\widetilde{f}, \widetilde{g})$ is the unique solution to the source problem \eqref{eq:source_problem}. 
It is straightforward  to verify that the solution operator $T$ is  well-defined, linear, and bounded.  Note that $\big(\lambda, (u,z) \big)$ is an eigenpair of \eqref{eq:linear_eigenvalue_problem} if and only if $\big(\lambda^{-1}, (u,z)\big)$ is an eigenpair of $T$, that is, 
\begin{equation}\label{eq:operator_T}
  T(u,z) = \mu (u,z), \quad \mbox{where} \quad \mu := \lambda^{-1},   
\end{equation}
see \cite{yang2015error,mora2021virtual} for the details.  We remark that no spurious eigenvalues are introduced into the problem \eqref{eq:operator_T} since if $\mu \ne 0$, then $(0,z)$ is not an eigenfuntion of the problem \eqref{eq:operator_T}.

The regularity result for the solution to the
source problem \eqref{eq:source_problem} is stated
in the following lemma, see also Lemma 2.2 of \cite{mora2021virtual}.

\begin{lemma}\label{lemma:regularity_result}
There exist a real number $\sigma\in (0, 1]$ and a positive constant $C$ depending on the index of refraction $n$ such that for all $(f, g) \in \mathbb{H}$, the solution of \eqref{eq:source_problem} satisfies  $(\widetilde{f},  \widetilde{g} ) \in H^{2+\sigma}(\Omega ) \times H_0^
2(\Omega ) $, and
\[
\| \widetilde{f} \|_{2+\sigma, \Omega} + \| \widetilde{g} \|_{2, \Omega} \le C \|  (f,g) \|_{\mathbb{H}}.
\]
\end{lemma}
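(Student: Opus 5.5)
Write the source problem \eqref{eq:source_problem} as two decoupled equations by choosing test functions $(v,0)$ and $(0,w)$. With $(0,w)$, $w\in L^2(\Omega)$ arbitrary, we get
\[
(\widetilde g,w)_0=-(f,w)_0 ,
\]
so $\widetilde g=-f$ a.e. Since $f\in H_0^2(\Omega)$, this gives $\widetilde g\in H_0^2(\Omega)$ with $\|\widetilde g\|_{2,\Omega}=\|f\|_{2,\Omega}\le\|(f,g)\|_{\HH}$. This disposes of the second component. Note that the extra smoothness comes from the $-(u,w)_0$ term in $B$, which transfers the $H^2_0$ regularity of $f$ to $\widetilde g$.

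With $(v,0)$, $v\in H_0^2(\Omega)$, we get the fourth-order problem
\[
\Big(\frac{1}{n-1}\Delta\widetilde f,\Delta v\Big)_0=\Big(\frac{n}{n-1}\Delta f+\frac{n}{n-1}g,\,v\Big)_0+\Big(\frac{1}{n-1}f,\Delta v\Big)_0=:\ell(v).
\]
The first term on the right is a pairing with an $L^2$ function $F:=\frac{n}{n-1}(\Delta f+g)$, and $\|F\|_{0}\le C\|(f,g)\|_{\HH}$ because $n\in L^\infty$ and $n-1\ge\alpha$. The second term can be handled in two ways. One option is to integrate by parts twice, using $f\in H_0^2$; but then derivatives fall on $1/(n-1)$, which is only $L^\infty$. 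The better option is to set $\psi:=\frac{1}{n-1}\Delta\widetilde f-\frac{1}{n-1}f\in L^2$. The equation then reads
\[
(\psi,\Delta v)_0=(F,v)_0 \qquad \forall v\in H_0^2(\Omega),
\]
so $\Delta\psi=F$ in the distributional sense. Hence $\Delta\widetilde f=(n-1)\psi+f$.

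The main obstacle is getting $H^{2+\sigma}$ regularity for $\widetilde f$ when $n$ is only $L^\infty$ and $\Omega$ is only Lipschitz. I would proceed as in Lemma 2.2 of \cite{mora2021virtual}. View $\ell$ as a functional on $H_0^2(\Omega)$ and check that it is actually bounded in the weaker $H^{-1}$-type norm where possible. The term $(F,v)_0$ lies in $H^{-2+s}$ for any $s\le 2$. The term $(\frac{1}{n-1}f,\Delta v)_0$ is bounded by $\|f/(n-1)\|_0\|v\|_{2}$, which only gives an $H^{-2}$ functional. If $n$ is smooth, however, it lies in $H^{-1}$ or better.

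The strategy is therefore to use the known shift theorem for the biharmonic-type operator $v\mapsto\Delta(\frac{1}{n-1}\Delta v)$ on Lipschitz domains. For a right-hand side in $H^{-2+\sigma}(\Omega)$ with some $\sigma\in(0,1]$, this theorem gives a solution in $H^{2+\sigma}(\Omega)$ with $\|\widetilde f\|_{2+\sigma}\le C\|\ell\|_{H^{-2+\sigma}}$; the results of Grisvard/Dauge are for the pure biharmonic, and the lower-order coefficient perturbation is handled as in \cite{mora2021virtual}, which assumes enough smoothness of $n$. To verify $\ell\in H^{-2+\sigma}$, I would bound $|(\frac{1}{n-1}f,\Delta v)_0|$ by $C\|f\|_{2}\|v\|_{2-\sigma}$ after integrating by parts once against smooth $v$ and using density. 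I expect this step to require additional regularity of $n$ (e.g. $n\in W^{1,\infty}$), or else to cite the regularity statement of \cite{mora2021virtual} directly, as the paper indicates.

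Finally, combine the two estimates to obtain $\|\widetilde f\|_{2+\sigma,\Omega}+\|\widetilde g\|_{2,\Omega}\le C\|(f,g)\|_{\HH}$, with $C$ depending on $n$ through $\alpha$, $\|n\|_{L^\infty}$ and the shift-theorem constant.
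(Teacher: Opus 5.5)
Your decoupling is correct, and it already says more than the paper, which gives no argument for this lemma beyond the pointer to Lemma~2.2 of \cite{mora2021virtual}. Testing with $(0,w)$ gives $\widetilde g=-f\in H_0^2(\Omega)$ with $\|\widetilde g\|_{2,\Omega}=\|f\|_{2,\Omega}$. Testing with $(v,0)$ gives the fourth-order problem $(\frac{1}{n-1}\Delta\widetilde f,\Delta v)_0=\ell(v)$.

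The gap lies in the only nontrivial step, the $H^{2+\sigma}$ bound for $\widetilde f$. You assert this via a ``known shift theorem'' for $v\mapsto\Delta(\frac{1}{n-1}\Delta v)$ and then defer to the citation. Two things are wrong with that step:
\begin{enumerate}
\item[(i)] The coefficient $\frac{1}{n-1}$ sits in the principal part, so this is not a lower-order perturbation of $\Delta^2$.
\item[(ii)] Under the hypothesis actually in force, $n\in L^\infty(\Omega)$ with $n-1\ge\alpha$, no such theorem can hold. Your own identity $\Delta\widetilde f=(n-1)\psi+f$ with $\Delta\psi=F\in L^2(\Omega)$ gives $\psi\in H^2_{\mathrm{loc}}(\Omega)\subset C(\Omega)$, and $\psi\not\equiv0$ as soon as $F\neq0$ (take $f=0$, $g\neq0$). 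On a ball where $|\psi|\ge c>0$, you can divide $\Delta\widetilde f-f=(n-1)\psi$ by the H\"older continuous $\psi$. Then $\Delta\widetilde f\in H^{\sigma}$ would force $n$ itself to have some positive fractional Sobolev regularity there, which an arbitrary $L^\infty$ index need not have.
\end{enumerate}
So a proof must add a smoothness hypothesis on $n$ and use a biharmonic shift theorem on $\Omega$. Falling back on the citation, as the paper does, does not remove this: any correct version of that result must assume more than $n\in L^\infty$.

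One way to close the gap is to assume, e.g., $n\in W^{2,\infty}(\Omega)$. For $\phi\in H_0^2(\Omega)$ put $v=(n-1)\phi\in H_0^2(\Omega)$. Since $\frac{1}{n-1}\Delta v=\Delta\phi+\frac{1}{n-1}(2\nabla n\cdot\nabla\phi+\phi\,\Delta n)$ and $n$ is real, you get
\[
(\Delta\widetilde f,\Delta\phi)_0=\ell\big((n-1)\phi\big)-\Big(\Delta\widetilde f,\tfrac{1}{n-1}\big(2\nabla n\cdot\nabla\phi+\phi\,\Delta n\big)\Big)_0 .
\]
Now use $(\frac{1}{n-1}f,\Delta w)_0=-(\nabla(\frac{f}{n-1}),\nabla w)_0$ for $w\in H_0^2(\Omega)$, together with the Lax--Milgram bound $\|\widetilde f\|_{2,\Omega}\le C\|(f,g)\|_{\HH}$. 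The right-hand side is then bounded by $C\|(f,g)\|_{\HH}\|\phi\|_{1,\Omega}$, i.e.\ $\Delta^2\widetilde f\in H^{-1}(\Omega)$ with $\widetilde f\in H_0^2(\Omega)$. The regularity theory for the Dirichlet biharmonic problem on $\Omega$ (Grisvard; Blum--Rannacher on polygons) then yields $\|\widetilde f\|_{2+\sigma,\Omega}\le C\|(f,g)\|_{\HH}$, with $\sigma$ fixed by the geometry. Combined with $\widetilde g=-f$, this proves the lemma under that extra assumption on $n$.
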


It follows from the compact embedding $H^{2+\sigma}(\Omega)\times H_0^2(\Omega) \hookrightarrow \mathbb{H} $ and Lemma \ref{lemma:regularity_result} that the solution operator $T: \mathbb{H}\to H^{2+\sigma}(\Omega)\times H_0^2(\Omega)$ is compact.

Since the linear eigenvalue problem \eqref{eq:linear_eigenvalue_problem} is non-self-adjoint, we need to consider its adjoint problem: Find $\big(\lambda^{*}, (u^*,z^*) \big)\in \mathbb{C}\times \mathbb{H}$ such that
\begin{equation}\label{eq:adjoint_eigenvalue_problem}
    A\big( (v,w), (u^*,z^*)  \big) = \overline{\lambda^*} \,B\big( (v,w), (u^*,z^*) \big) \qquad \forall (v,w)\in \mathbb{H},
\end{equation}
and the corresponding source problem: For any given $(f^*,g^*)\in \mathbb{H}$, find $(\widetilde{f}^*, \widetilde{g}^*)\in \mathbb{H}$ such that
\begin{equation}\label{eq:adjoint_source_problem}
 A\big( (v,w), (\widetilde{f}^*, \widetilde{g}^*)  \big) =  B\big( (v,w), (f^*,g^*) \big) \qquad \forall (v,w)\in \mathbb{H}.    
\end{equation}
By Lemma \ref{lemma_A_B} and the Lax-Milgram theorem, the adjoint problem \eqref{eq:adjoint_source_problem}  admits a unique solution for each $(f^*,g^*)\in \mathbb{H}$. We therefore define the  solution operator $T^*: \mathbb{H}\to \mathbb{H}$ by $T^*(f^*,g^*) := (\widetilde{f}^*, \widetilde{g}^*)$, which equivalently satisfies
\begin{equation}
 A\big( (v,w), T^*(f^*,g^*) \big) =  B\big( (v,w), (f^*,g^*) \big) \quad \forall (v,w)\in \mathbb{H}.       
\end{equation}
It is straightforward to verify that the adjoint eigenvalue problem \eqref{eq:adjoint_eigenvalue_problem} admits the equivalent operator formulation
\begin{equation}
    T^*(u^*,z^*) = \frac{1}{\lambda^*}(u^*,z^*).
\end{equation}

It can be proved that $T^*$ is the adjoint operator of $T$ with respect to the inner product $A(\cdot, \cdot)$ over the Hilbert space $\mathbb{H}$, see \cite{yang2015error} for the details.  Consequently, the eigenvalues of  \eqref{eq:linear_eigenvalue_problem} and \eqref{eq:adjoint_eigenvalue_problem} are related by $\lambda = \overline{\lambda^*}$. Furthermore, the regularity result of the adjoint problem \eqref{eq:adjoint_source_problem} is provided in the following lemma, see Lemma 2.4 of  \cite{mora2021virtual}.

\begin{lemma}\label{lemma:adjoint_regularity_result}
There exist a real number $\sigma\in (0, 1]$ and a positive constant $C$ depending on the index of refraction $n$ such that for all $(f^*, g^*) \in \mathbb{H}$, the solution  of \eqref{eq:adjoint_source_problem} satisfies  $(\widetilde{f}^*,  \widetilde{g}^* ) \in H^{2+\sigma}(\Omega ) \times H_0^
2(\Omega ) $, and
\[
\| \widetilde{f}^* \|_{2+\sigma,\Omega} + \| \widetilde{g}^* \|_{2,\Omega} \le C \|  (f^*,g^*) \|_{\mathbb{H}}.
\]
\end{lemma}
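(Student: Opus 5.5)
We plan to unpack the adjoint source problem \eqref{eq:adjoint_source_problem} into two decoupled equations by testing with $(v,0)$ and $(0,w)$ separately, and then apply elliptic regularity for the biharmonic-type operator with coefficient $\frac{1}{n-1}$, as in the proof of Lemma \ref{lemma:regularity_result}.

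\emph{The $w$-equation.} Testing with $(0,w)$, $w\in L^2(\Omega)$, gives
\[
(w,\widetilde{g}^*)_0 = -(w,f^*)_0 ,
\]
so $\widetilde{g}^* = -f^*$. Since $f^*\in H_0^2(\Omega)$, we get $\widetilde{g}^*\in H_0^2(\Omega)$ and $\|\widetilde{g}^*\|_{2,\Omega} = \|f^*\|_{2,\Omega}\le \|(f^*,g^*)\|_{\mathbb{H}}$. No regularity theory is needed for this component.

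\emph{The $v$-equation.} Testing with $(v,0)$, $v\in H_0^2(\Omega)$, gives
\[
\Big(\tfrac{1}{n-1}\Delta v,\Delta \widetilde{f}^*\Big)_0 = \Big(\tfrac{n}{n-1}\Delta v, f^*\Big)_0 + \Big(\tfrac{1}{n-1}v,\Delta f^*\Big)_0 + \Big(\tfrac{n}{n-1}v, g^*\Big)_0 .
\]
Because $n$ is real, we take complex conjugates and integrate by parts twice in the first term on the right, which is legitimate since $f^*\in H_0^2(\Omega)$:
\[
(\Delta v, \tfrac{n}{n-1}f^*)_0 = (v,\Delta(\tfrac{n}{n-1}f^*))_0 .
\]
This step needs $\tfrac{n}{n-1}$ to be smooth enough, or else the right side must be read as an $H^{-2}$ functional. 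We will sidestep this by observing that, in every case, the right-hand side defines a functional $\ell(v)$ that is bounded in the dual norm $H^{-2+\cdots}$ needed for shift regularity. Concretely, the terms $\big(\tfrac{1}{n-1}v,\Delta f^*\big)_0$ and $\big(\tfrac{n}{n-1}v,g^*\big)_0$ are bounded by $C\|v\|_{0,\Omega}\|(f^*,g^*)\|_{\mathbb{H}}$, so they lie in $L^2\subset H^{-2+\sigma}$.

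The remaining term $\big(\tfrac{n}{n-1}\Delta v,f^*\big)_0$ is handled exactly as in the direct problem. Setting $\phi = \tfrac{1}{n-1}\Delta\widetilde{f}^*$, the equation reads
\[
\Delta\phi = \Delta\Big(\tfrac{n}{n-1}f^*\Big) + L^2\text{-data}
\]
in the distributional sense. Equivalently,
\[
\tfrac{1}{n-1}\Delta\widetilde{f}^* - \tfrac{n}{n-1}f^* = \psi ,
\]
where $\psi$ solves a Laplacian problem with $L^2$ data and $\psi\in H^{\sigma}$ or better. In this way the structure reduces to the same fourth-order operator $\Delta\big(\tfrac{1}{n-1}\Delta\,\cdot\big)$ with Dirichlet data on $H_0^2(\Omega)$ as in Lemma \ref{lemma:regularity_result}. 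Applying the regularity estimate for this operator on Lipschitz domains (Grisvard/Dauge-type shift theorem) yields $\widetilde{f}^*\in H^{2+\sigma}(\Omega)$ with
\[
\|\widetilde{f}^*\|_{2+\sigma,\Omega}\le C\big(\|f^*\|_{2,\Omega}+\|g^*\|_{0,\Omega}\big).
\]

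\emph{Main obstacle.} The main obstacle is justifying the shift regularity for the variable-coefficient operator with only $n\in L^\infty$, and controlling the term with $\Delta v$ tested against $f^*$. Our first approach is to mirror verbatim the argument behind Lemma \ref{lemma:regularity_result}, i.e.\ Lemma 2.2 of \cite{mora2021virtual}. The adjoint right-hand side differs from the direct one only by swapping which argument carries $\Delta$, and both versions produce data of the same Sobolev order, so the same $\sigma$ and constant $C$ work. Finally, adding the two bounds gives the stated estimate.
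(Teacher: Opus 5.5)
\textbf{There is a genuine gap.} You decoupled the adjoint problem incorrectly, and the slip hides the one place where real work is needed. In \eqref{eq:adjoint_source_problem} the test pair $(v,w)$ sits in the \emph{first} argument of $B$, so
\[
B\big((v,w),(f^*,g^*)\big)=\Big(\tfrac{n}{n-1}\Delta v,f^*\Big)_0+\Big(\tfrac{1}{n-1}v,\Delta f^*\Big)_0+\Big(\tfrac{n}{n-1}w,f^*\Big)_0-(v,g^*)_0 .
\]
Testing with $(0,w)$ gives $(w,\widetilde g^*)_0=(\tfrac{n}{n-1}w,f^*)_0$ for all $w\in L^2(\Omega)$. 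Hence $\widetilde g^*=\tfrac{n}{n-1}f^*$, not $-f^*$. You transcribed the coupling of the direct problem, where indeed $\widetilde g=-f$. The last term of your $v$-equation should likewise be $-(v,g^*)_0$, but that slip is harmless.

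The error in $\widetilde g^*$ is not harmless. To get $\widetilde g^*\in H_0^2(\Omega)$ with $\|\widetilde g^*\|_{2,\Omega}\le C\|f^*\|_{2,\Omega}$, multiplication by $\tfrac{n}{n-1}$ must map $H_0^2(\Omega)$ into itself, which holds e.g.\ for $n\in W^{2,\infty}(\Omega)$. Under the standing assumption $n\in L^\infty(\Omega)$ alone the claim is false. If $n$ jumps across an interface on which $f^*\neq 0$, then $\widetilde g^*$ jumps too and is not even in $H^1(\Omega)$. So any valid proof must use extra smoothness of $n$, and an argument that uses none cannot be right. This is also why you cannot mirror the direct case verbatim on the grounds that the adjoint data differ only in which argument carries $\Delta$: the adjoint also moves the weight $\tfrac{n}{n-1}$ onto the datum $f^*$.

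The same issue affects $\widetilde f^*$. In the direct problem, $\widetilde f-f\in H_0^2(\Omega)$ solves $\Delta(\tfrac{1}{n-1}\Delta\,\cdot)$ with $L^2$ data. In the adjoint, the term $(\tfrac{n}{n-1}\Delta v,f^*)_0$ is only an $H^{-2}$-bounded functional of $v$ unless $n$ is differentiable. With $n\in W^{1,\infty}(\Omega)$, one integration by parts turns it into $-(\nabla v,\nabla(\tfrac{n}{n-1}f^*))_0$, an $H^{-1}$ functional, which suffices for an $H^{2+\sigma}$ shift theorem with $\sigma\le 1$. As you suspected, that shift theorem for $\Delta(\tfrac{1}{n-1}\Delta\,\cdot)$ also fails for general $L^\infty$ coefficients.

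Your detour through $\psi$ does not get around this. $\psi$ carries no boundary condition, so $\Delta\psi\in L^2(\Omega)$ gives only interior regularity; any $L^2$ harmonic function satisfies the homogeneous relation $(\psi,\Delta v)_0=0$ for all $v\in H_0^2(\Omega)$. Asserting $\psi\in H^{\sigma}(\Omega)$ is essentially the regularity you are trying to prove.

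The paper gives no argument of its own and simply cites Lemma 2.4 of \cite{mora2021virtual}. A correct version of your approach would:
\begin{itemize}
\item state the needed smoothness of $n$ explicitly;
\item use $\widetilde g^*=\tfrac{n}{n-1}f^*$;
\item apply the shift theorem to $\Delta(\tfrac{1}{n-1}\Delta\widetilde f^*)=\Delta(\tfrac{n}{n-1}f^*)+\tfrac{1}{n-1}\Delta f^*-g^*$ with $\widetilde f^*\in H_0^2(\Omega)$.
\end{itemize}
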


\section{Isogeometric discretization for the transmission eigenvalue problem} \label{sec:IGA_transmission_eigenvalues}

In this section, we employ the geometry-independent field approximation (GIFT) scheme proposed in~\cite{Atroshchenko2018} to discretize the linear  eigenvalue problem \eqref{eq:linear_eigenvalue_problem}. 
%For the GIFT scheme considered here, the physical domain $\Omega$ is represented by the NURBS geometric mapping $\mathbf{F}:\widehat{\Omega}\to\Omega$ defined at the coarsest level,  where $\widehat{\Omega} = (0,1)^d$ ($d=2,3$) is the parametric domain, while the finite-dimensional approximation space $V_h \subset H_0^2(\Omega)$ is constructed from B-spline basis functions defined on $\widehat{\Omega}$ and the push forward to $\Omega$ via $\mathbf{F}$. 
% A key feature of this scheme is that mesh refinement is performed exclusively on the field approximation space, leaving the geometric description unchanged at the coarsest level, thereby preserving the exact geometry while enhancing approximation capacity. 
For a detailed description of B-splines and NURBS, we refer the reader to~\cite{piegl2012nurbs}; for the fundamentals of NURBS-based IGA, see~\cite{hughes2005isogeometric, IGA_book}.  Here, we restrict our attention to the two-dimensional splines, and the extension to three dimensions is straightforward.

\subsection{B-splines and NURBS}

For the sake of completeness, we briefly present an overview of B-spline and NURBS basis functions. We first introduce the knot vector
\begin{equation}\label{eq:knot_vector_xi}
  \Xi =\{0= \xi_1,\,\xi_2,\,\dots,\,\xi_{n_{\xi}+ p +1} = 1\},
\end{equation}
where $n_{\xi}$ and $p$ denote the number and degree of the B-spline basis functions, respectively, and the knots $\xi_i \in \mathbb{R}$ satisfy 
$\xi_1 \le \xi_2 \le \cdots \le \xi_{n_{\xi}+p+1}$. 
Given the knot vector $\Xi$ \eqref{eq:knot_vector_xi}, we define the one-dimensional parametric
domain $\widehat{\Omega}:=(0,1)$. 
Knots can be repeated, and we will consider the so-called open knot vectors, % \cite{piegl2012nurbs,hughes2005isogeometric},
in which the first as well as the last knots appear $p + 1$ times. 
% In the following,
% we only consider the open knot vectors.

The $i$-th univariate B-spline basis function $N_{i,p}(\xi)$, $1\le i \le n_{\xi}$,  is a piecewise 
polynomial of degree $p$,  defined by the Cox--de Boor recursion formula
\begin{equation}\label{recursive}
  N_{i,k}(\xi)=\frac{\xi-\xi_i}{\xi_{i+k}-\xi_i}N_{i,k-1}(\xi)+\frac{\xi_{i+k+1}-\xi}{\xi_{i+k+1}-\xi_{i+1}}N_{i+1,k-1}(\xi),
\end{equation}
for $k=1,2,\ldots,p$, and 
\begin{equation}\label{zerodegree}
  N_{i,0}(\xi) = \left \{ \begin{array}{l}
    1  \qquad \mbox{ if $\xi_i\le \xi < \xi_{i+1}$},\\
    0  \qquad \mbox{ otherwise},
  \end{array}\right .
\end{equation}
where the quotient $0/0$ is set to  zero, and $\xi \in \overline{\widehat{\Omega}} = [0,1]$.

The B-spline basis functions possess several  excellent properties, including (1)  non-negativity, (2) partition of unity, (3) $N_{i,p}(\xi)$ has local support, and its support is $[\xi_i,\xi_{i+p+1}]$, and (4) $N_{i,p}(\xi)$ is $C^{p-m_j}$ at
$\xi=\xi_{j}$, where $m_j$ is the multiplicity of the knot $\xi_{j}$ in $[\xi_i,\xi_{i+p+1}]$.

Given the following two open knot vectors
\[
\Xi = \{\xi_1 = \cdots = \xi_{p+1} = 0 < \xi_{p+2} \le \cdots \le \xi_{n_\xi} < \xi_{n_\xi+1} = \cdots = \xi_{n_\xi + p + 1} = 1\},
\]
and
\[
\mathcal{H} = \{\eta_1 = \cdots = \eta_{q+1} = 0 < \eta_{q+2} \le \cdots \le \eta_{n_\eta} < \eta_{n_\eta+1} = \cdots = \eta_{n_\eta + q + 1} = 1\},
\]
where $n_\xi$ and $n_\eta$ are the numbers of univariate B-spline basis functions in the $\xi$- and $\eta$-parametric directions, respectively, and $p$ and $q$ denote the corresponding degrees, the bivariate (tensor-product) B-spline basis functions are defined as
\begin{equation}\label{eq:B-spline}
    B_{i,j}^{(p,q)}(\xi,\eta) := N_{i,p}(\xi) M_{j,q}(\eta), \qquad 1\le i \le n_{\xi}, \,\, 1 \le j \le n_\eta,
\end{equation}
where $N_{i,p}(\xi)$ and $M_{j,q}(\eta)$ are the univariate B-splines defined on the knot vectors $\Xi$ and $\mathcal{H}$, respectively. Associated with these two knot vectors is a mesh for the two-dimensional parametric domain $\widehat{\Omega} =  (0,1)^2$:
\begin{equation}\label{parameter_mesh}
    \widehat{\mathcal{T}}_h = \Big\{ \widehat{K} \;\Big|\; \widehat{K} = [\xi_i,\xi_{i+1}] \times [\eta_j,\eta_{j+1}],
    \ \text{with } \operatorname{meas}(\widehat{K}) \ne 0, \quad 1\le i \le n_\xi,\; 1\le j \le n_\eta \Big\}, 
\end{equation}
where the subscript $h$ is the global mesh size, which is defined in \eqref{eq:mesh_size_h}. Moreover,    we let $h_{\widehat{K}}$ be the diameter of $\widehat{K} \in \widehat{\mathcal{T}}_h $. We also assume that all of the meshes are \textit{shape regular}, that is, there exists a positive constant $C$ such that
\begin{equation}
\frac{  h_{\widehat{K}}   }{     \rho_{\widehat{K}} }
 \le C \qquad  \forall \widehat{K} \in \widehat{\mathcal{T}}_h,
\end{equation}
where $ \rho_{\widehat{K}} $ is the smallest edge of $\widehat{K}$.
% that is, $ h_{\widehat{K}} = \operatorname{diam}(\widehat{K})$.

The bivariate Non-Uniform Rational B-Spline (NURBS) basis functions 
% are based on the bivariate B-splines and a set of 
% positive weights, and 
are defined by
\begin{equation}
  R_{i,j}^{(p,q)}(\xi,\eta) =\frac{w_{i,j} B_{i,j}^{(p,q)}(\xi,\eta)}{w(\xi,\eta)}, \quad \mbox{for}\quad 1\le i \le n_\xi,\, 1\le j \le n_\eta,
 \end{equation}
 where $w_{i,j}>0$ are the weights, and $w(\xi,\eta)$ is the  weight function defined by
 \begin{equation}
  w(\xi,\eta) = \sum_{i=1}^{n_\xi} \sum_{j=1}^{n_\eta} w_{i,j} B_{i,j}^{(p,q)}(\xi,\eta).      
 \end{equation}

 % Now let us introduce the bivariate NURBS space over the parametric domain $\widehat{\Omega}$
 % \begin{equation}
 %       \mathcal{N} = \emph{span} \Big \{  R_{i,j}^{(p,q)}(\xi,\eta)  \Big\}_{1\le i \le n,\, 1\le j \le m}. 
 % \end{equation}

%  \begin{remark}
% The NURBS space reduces to a B-Splines space when all weights
% $\{w_{i,j}\}_{1\le i \le n,1\le j\le m}$ are equal to a constant, 
% due to the partition of unity  of B-spline basis functions.

% \end{remark}

Assume that the physical domain $\Omega \subset \mathbb{R}^2$ can be exactly described by a NURBS geometric mapping, that is, $\Omega = {\bf F}(\widehat \Omega)$, where ${\bf F}$ is given by
 \begin{equation}
 {\bf{F}}(\xi,\eta) = \sum_{i=1}^{n_\xi} \sum_{j=1}^{n_\eta} {\bf{P}}_{i,j}\, R_{i,j}^{(p,q)}(\xi,\eta), 
 \quad \mbox{for} \quad (\xi,\eta) \in \overline{\widehat{\Omega}}=[0,1]^2,    
 \end{equation}
 here, ${\bf P}_{i,j}\in \mathbb{R}^2$, $1\le i \le n_\xi$, $1\le j \le n_\eta$, are the control points. 
 Furthermore, following \cite{Bazilevs2006}, we assume that $\bf F$ is invertible and has a smooth
 inverse on every element $\widehat{K}\in \widehat{\mathcal{T}}_h$. 
 
 The mesh for the physical domain $\Omega$ is given by
\begin{equation}\label{physical_mesh}
\mathcal{T}_h=\Big\{ K\big| K= {\bf F}(\widehat{K}) \subset \Omega, \quad \forall \widehat{K} \in \widehat{\mathcal{T}}_h  \Big\},      
\end{equation}
where the subscript $h$ denotes the mesh size,  defined by
\begin{equation}\label{eq:mesh_size_h}
h:= \max \{h_K: K\in \mathcal{T}_h\}, 
\end{equation}
with $h_K$ being defined as (see, e.g., \cite{Bazilevs2006})
\begin{equation}
h_K= \|  \nabla {\bf F}\|_{L^{\infty}(\widehat{K})} \, h_{\widehat{K}}.   
\end{equation} 

In the framework of IGA, there are three refinement methods: 
(i) $h$-refinement: knot insertion, 
(ii) $p$-refinement: degree elevation, and 
(iii) $k$-refinement: $p$-refinement followed by $h$-refinement; 
we refer to \cite{hughes2005isogeometric,Bazilevs2006} for further details. 
After any refinement process, we continue to denote by $n_\xi$ and $n_\eta$ 
the numbers of spline basis functions in the $\xi$- and $\eta$-directions, 
respectively. If the original degrees $p$ and $q$ are unequal, 
we may perform degree elevation on the lower-degree direction so that, 
without loss of generality, $p = q$.

\subsection{Isogeometric discretization }

We now present the GIFT scheme~\cite{Atroshchenko2018}, which generalizes the NURBS-based IGA, for discretizing  the linear eigenvalue problem \eqref{eq:linear_eigenvalue_problem}. Unlike the NURBS-based IGA, where NURBS basis functions are used to construct the finite-dimensional approximation space $V_h$, in the GIFT scheme we employ the B-spline basis functions to construct the finite-dimensional approximation space $V_h$ as follows:
\begin{equation}\label{eq:S_h}
    V_h := \operatorname{span}\left\{ B_{i,j}^{(p,q)}\circ \mathbf{F}^{-1} :\ 1\le i \le n_\xi,\ 1\le j \le n_\eta \right\} \cap H_0^2(\Omega),
\end{equation}
where $p=q\ge 2$, $B_{i,j}^{(p,q)}$ are the bivariate B-splines generated from the (refined) knot vectors used to construct the NURBS geometric mapping $\bf{F}$, and $\mathbf{F}^{-1}$ is the inverse of  $\mathbf{F}$. Since the parameterization of $\mathbf{F}$ remains fixed during $h$-, $p$-, and $k$-refinements, in our GIFT scheme we only need to refine the bivariate B-splines $B_{i,j}^{(p,q)}$, while the knot vectors and control net of $\mathbf{F}$ remain as initially defined. Compared with the classical NURBS-based IGA, the GIFT scheme is easier to implement and more efficient, as there is no need to update the control points and weights of the geometric mapping in the computation.

% 

% Moreover, let \(h:=\max_{K\in\mathcal K_h}h_K\). We denote by
% \(\widetilde Q\) the support extension of \(Q\) and set
% \(\widetilde K:=\mathbf F(\widetilde Q)\). 

% Taking into account of the boundary condition appearing in the transmission eigenvalue problem, we first define the finite-dimensional approximation space $V_h$
% \begin{equation}
%     V_h = \mathcal{S}_h \cap H_0^2(\Omega),
% \end{equation}
% where the spline space $\mathcal{S}_h$ is defined in \eqref{eq:S_h},  

The finite-dimensional approximation space of the Galerkin method for the variational formulation \eqref{eq:linear_eigenvalue_problem} is 
\begin{equation}
    \mathbb{H}_h:=V_h\times V_h \subset \mathbb{H} =H_0^2(\Omega)\times L^2(\Omega) , 
\end{equation}
and the discrete variational form of \eqref{eq:linear_eigenvalue_problem} reads: Find $\big( \lambda_h, (u_h,z_h) \big)\in \mathbb{C}\times \mathbb{H}_h$ with $\lambda_h \ne 0$ and $u_h\ne 0$,  such that 
\begin{equation}\label{eq:Discrete_eigenvalue_problem}
    A\big( (u_h,z_h),(v_h,w_h)   \big) = \lambda_h B( (u_h,z_h),(v_h,w_h)  )  ~~\quad \forall (v_h,w_h)\in \mathbb{H}_h. 
\end{equation}

We define the discrete solution operator $T_h: \mathbb{H}\to \mathbb{H}_h$ as follows: For any $(f,g)\in \mathbb{H}$, we have
\begin{equation}\label{eq:T_h_def}
    T_h(f,g) = (\widetilde{f}_h,\widetilde{g}_h),
\end{equation}
where $(\widetilde{f}_h,\widetilde{g}_h)$ is the unique solution to the associated discrete source problem: For any given $(f,g)\in \mathbb{H}$, find $(\widetilde{f}_h,\widetilde{g}_h)\in \mathbb{H}_h$ such that
\begin{equation}\label{eq:Discrete_Source_Problem}
A((\widetilde{f}_h,\widetilde{g}_h), (v_h,w_h)) = B( (f,g), (v_h,w_h))\qquad \forall (v_h,w_h) \in \mathbb{H}_h.
\end{equation}

It follows from \eqref{eq:Discrete_eigenvalue_problem}, \eqref{eq:T_h_def}, and \eqref{eq:Discrete_Source_Problem} that $\big(\lambda_h^{-1}, (u_h,z_h) \big)$ is an eigenpair of $T_h$, that is
\begin{equation}
    T_h (u_h,z_h) = \lambda_h^{-1} (u_h,z_h).
\end{equation}

Let $T_h^*: \mathbb{H}\to \mathbb{H}_h$ be the adjoint operator of $T_h$, which is defined by $T_h^*(f,g):=(\widetilde{f}_h^*, \widetilde{g}_h^*)$, where $(f,g) \in \mathbb{H}$, and  $(\widetilde{f}_h^*, \widetilde{g}_h^*)\in \mathbb{H}_h$ is the unique solution to the following discrete source problem
\begin{equation}
    A\big( (v_h,w_h), (\widetilde{f}_h^*, \widetilde{g}_h^*)  \big) =  B\big( (v_h,w_h), (f,g) \big) ~~~~ \quad  \forall (v_h,w_h)\in \mathbb{H}_h.
\end{equation}

\subsection{Approximation with B-splines in the physical domain}

Since in our GIFT scheme, we utilize the B-spline basis functions, which are obtained by performing a certain refinement to the initial knot vectors $\Xi$ and $\mathcal{H}$, to construct the finite-dimensional approximation space $V_h$, we need to establish the approximation theory with B-splines in the physical domain.

Let the bivariate B-spline space over the parametric domain  $\widehat{\Omega} = (0,1)^2$ be
 \begin{equation}
    \mathcal{S}_h := \emph{span}\{ B_{i,j}^{(p,q)}(\xi, \eta)  \}_{1\le i \le n_\xi,\, 1\le j \le n_\eta },  
 \end{equation} 
following \cite{Bazilevs2006}, we define the projector $\Pi_{S_h}: L^2(\widehat{\Omega})\to \mathcal{S}_h$ as
\begin{equation}
\Pi_{S_h} v := \sum_{i=1}^{n_\xi} \sum_{j=1}^{n_\eta} \lambda_{ij}(v)B_{i,j}^{(p,q)} \qquad \forall v\in L^2(\widehat{\Omega}),    
\end{equation}
where $\lambda_{ij}$ are the dual basis functionals with respect to $\{ B_{i,j}^{(p,q)} \}_{1\le i \le n_\xi, \, 1\le j \le n_\eta}$, i.e.,
\begin{equation}
\lambda_{ij}(B_{i'j'}) = 
\begin{cases}
    1 \quad & \mbox{ if  }  \quad i = i' \mbox{  and  } j = j',\\
    0 \quad & \mbox{ otherwise.  }
\end{cases}
\end{equation}

Let the  spline space over the physical domain $\Omega$ be \begin{equation}\label{eq:V_h}
    \widetilde{V}_h := \emph{span}\{ B_{i,j}^{(p,q)}\circ {\bf{F}}^{-1}, \,\, 1\le i \le n_\xi,\,\, 1\le j \le n_\eta \},
\end{equation}
we introduce the projector $\Pi_{\widetilde{V}_h}: L^2( \Omega)\to \widetilde{V}_h$:
\begin{equation}
    \Pi_{\widetilde{V}_h} v := \big( \Pi_{S_h}(v\circ \mathbf{F}) \big) \circ  \mathbf{F}^{-1} \qquad \forall v \in L^2(\Omega).
\end{equation}

Concerning the error between $v$ and $\Pi_{\widetilde{V}_h}v$, we have the following result.
\begin{theorem}\label{theo:local_projection_error}
(Local projection error estimate). Let $\ell$ and $s$ be two integers such that $0\le \ell \le s \le p+1$, and let $\widehat{K}\in \widehat{\mathcal{T}}_h$, and $K = \mathbf{F}(\widehat{K})$, 
% there exists a constant $C_{shape}$ such that
we have
\begin{equation}\label{inequality:Local_Projection_error}
    | v - \Pi_{\widetilde{V}_h} v  |_{\ell,K} \le C_{shape} h_K^{s-\ell}\sum_{i=0}^s \| \nabla \mathbf{F} \|_{L^{\infty}(\widetilde{\widehat{K}})}^{i-s} | v  |_{i,\widetilde{K}} \qquad  \forall v \in L^2(\Omega)\cap H^{s}(\widetilde{K}),
\end{equation}
where $C_{shape}$ is a positive constant depending on the shape of $\Omega$, but independent of the mesh size $h$, $\widetilde{\widehat{K}}$ is the support  extension \cite{Bazilevs2006} of   $\widehat{K}$, and $\widetilde{K} = \mathbf{F}(\widetilde{\widehat{K}})$. 
\begin{proof}
By employing Lemmas 3.3 and 3.5 of \cite{Bazilevs2006}, the estimate \eqref{inequality:Local_Projection_error} can be proved analogously to Theorem 3.1 in \cite{Bazilevs2006} with a minor modification.
\end{proof}
\end{theorem}

Let $m_{\xi}$ and $m_{\eta}$ be the numbers of distinct knots in the knot vectors $\Xi$ and $\mathcal{H}$, respectively, 
and let the ordered distinct knots in $\Xi$ and $\mathcal{H}$  form the sets 
$\{\widetilde{\xi}_1, \widetilde{\xi}_2, \ldots, \widetilde{\xi}_{m_\xi}\}$ and 
$\{\widetilde{\eta}_1, \widetilde{\eta}_2, \ldots, \widetilde{\eta}_{m_\eta}\}$. 
Their corresponding multiplicities are denoted by 
$\{k_i^{(\xi)}\}_{i=1}^{m_\xi}$ and $\{k_j^{(\eta)}\}_{j=1}^{m_\eta}$. Since
the  multiplicities of internal knots determine the continuity of B-spline basis functions, we define the integer $r_{\min}$ as
\[
r_{\min} := \min\{r_{\min}^{(\xi)},\, r_{\min}^{(\eta)}\},
\]
where
\[
r_{\min}^{(\xi)} := \min_{2 \le i \le m_{\xi} - 1} (p - k_i^{(\xi)}), \qquad
r_{\min}^{(\eta)} := \min_{2 \le j \le m_{\eta} - 1} (q - k_j^{(\eta)}).
\]
Then we have $\widetilde{V}_h \subset C^{\,r_{\min}}(\Omega)$, and therefore the inclusion $\widetilde{V}_h \subset H^{r_{\min} + 1}(\Omega)$ holds.

\begin{theorem}\label{theo:global_projection_error}
(Global projection error estimate). Let $\ell$ and $s$ be two integers such that $0\le \ell  \le p+1$, and $\ell \le s$. If  $\ell \le r_{min} + 1$, then
% there exists a constant $C_{shape}$ such that
we have
\begin{equation}\label{inequality:Global_Projection_error}
    | v - \Pi_{\widetilde{V}_h} v  |_{\ell,\Omega} \le C_{shape} h^{\min\{p+1,s\} - \ell}\|v\|_{s,\Omega} \qquad  \forall v \in  H^{s}(\Omega).
\end{equation}
\begin{proof}
Since $\ell \le r_{min}+1$, we find that $\Pi_{\widetilde{V}_h}v \in H^{\ell}(\Omega)$. Therefore, the semi-norm $|v - \Pi_{\widetilde{V}_h}v|_{\ell, \Omega}$ is well-defined. By using Theorem \ref{theo:local_projection_error} and the standard arguments as in Corollary 3.1 of \cite{TAGLIABUE2014277}, we can prove the error estimate \eqref{inequality:Global_Projection_error} immediately.
\end{proof}
\end{theorem}

Since Lemma~\ref{lemma:regularity_result} implies that the solution to the source problem~\eqref{eq:source_problem} may be of low regularity: the solution lies in $H^{2+\sigma}(\Omega)\times H_0^2(\Omega)$, where $\sigma \in (0,1]$, we need to estimate the projection error for functions  $v$ belonging  to the fractional-order Sobolev space, that is, $v\in H^{2+\sigma}(\Omega)$ with $\sigma\in(0,1]$. For this case, we set the degrees of B-splines to  $p=q=2$. 

\begin{theorem}\label{theo:projection_error_fractional}
 Let $\ell$ be an integer such that $0\le \ell  \le 2$. If  $\ell \le r_{min} + 1$, then
% there exists a constant $C_{shape}$ such that
we have
\begin{equation}\label{inequality:Projection_error_fractional}
    \| v - \Pi_{\widetilde{V}_h} v  \|_{\ell,\Omega} \le C_{shape} h^{ 2+ \sigma  - \ell}\|v\|_{2+\sigma,\Omega} \qquad  \forall v \in  H^{2+\sigma}(\Omega) \quad \mbox{with} \quad \sigma \in (0,1].
\end{equation}

\begin{proof}
We prove this theorem by  considering the following two cases.

\begin{enumerate}
    \item[(1)]  If $\sigma = 1$, then the error estimate \eqref{inequality:Projection_error_fractional} follows from Theorem \ref{theo:global_projection_error} 
with $p=2$ and $s=3$.
  \item[(2)] If $\sigma\in (0,1)$, we employ a standard Banach-space interpolation argument. 
Let $E_0 := H^2(\Omega)$ and $E_1 := H^3(\Omega)$. By the reiteration theorem for real interpolation, we have
\[
[E_0, E_1]_{\sigma, 2} = H^{2+\sigma}(\Omega).
\]
Consider the linear operator
\[
T = I - \Pi_{\widetilde{V}_h} : H^s(\Omega) \to H^\ell(\Omega), \quad  \mbox{for} \quad s = 2, 3.
\]
From Theorem \ref{theo:global_projection_error}, we obtain that
\[
\|T\|_{\mathcal{L}(H^2(\Omega), H^\ell(\Omega))} \le C_{\rm shape} h^{2-\ell}, \qquad  \mbox{and} \qquad
\|T\|_{\mathcal{L}(H^3(\Omega), H^\ell(\Omega))} \le C_{\rm shape} h^{3-\ell}.
\]
By using the Banach-space interpolation theory (see, e.g., \cite{brenner08}), we can conclude that
\[
\|T\|_{\mathcal{L}(H^{2+\sigma}(\Omega), H^\ell(\Omega))}
\le C_{\rm shape} h^{(2-\ell)(1-\sigma) + (3-\ell)\sigma}
= C_{\rm shape} h^{2+\sigma - \ell}.
\]
Consequently, we have
\[
\| v - \Pi_{\widetilde{V}_h} v \|_{\ell,\Omega}
\le C_{\rm shape} h^{2+\sigma - \ell} \|v\|_{2+\sigma,\Omega}.
\]
% which proves the desired estimate for $0<\sigma<1$. 
\end{enumerate}
% The proof is complete.

\end{proof}
\end{theorem}

We next consider the approximation property of the finite-dimensional space $V_h = \widetilde{V}_h \cap H_0^2(\Omega)$, in which the essential boundary conditions are imposed.

Let the projector $\Pi_{\mathcal{S}_h}^0: H_0^2(\widehat{\Omega}) \to \mathcal{S}_h \cap H_0^2(\widehat{\Omega})$ be defined by
\begin{equation}
    \Pi_{\mathcal{S}_h}^0 = 
    \sum_{\substack{1\le i \le n_{\xi}, 1\le j \le n_{\eta}\\
    B_{i,j}^{(p,q)}\in H_0^2(\widehat{\Omega})}}
    \lambda_{ij}(v)B_{i,j}^{(p,q)} \qquad \forall v\in H_0^2(\widehat{\Omega}), 
\end{equation}
and we define  the projector $\Pi_{V_h}: H_0^2(\Omega) \to V_h$ as 
\begin{equation}
    \Pi_{{V}_h} v := \big( \Pi_{S_h}^0(v\circ \mathbf{F}) \big) \circ  \mathbf{F}^{-1} \qquad \forall v \in H_0^2(\Omega).
\end{equation}

The approximation property of $V_h$ is established in the following theorem, whose proof is similar to that of Theorem \ref{theo:global_projection_error}.

\begin{theorem}\label{theo:global_projection_error_boundary}
Let $\ell$ and $s$ be two integers such that $0\le \ell  \le p+1$, and $\ell \le s$. If  $\ell \le r_{min} + 1$, then
% there exists a constant $C_{shape}$ such that
we have
\begin{equation}\label{inequality:Global_Projection_error_boundary}
    | v - \Pi_{{V}_h} v  |_{\ell,\Omega} \le C_{shape} h^{\min\{p+1,s\} - \ell}\|v\|_{s,\Omega} \qquad  \forall v \in  H^{s}(\Omega)\cap H_0^2(\Omega).
\end{equation}
\end{theorem}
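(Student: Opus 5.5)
The strategy is to reproduce the argument behind Theorem \ref{theo:global_projection_error}, with the full quasi-interpolant replaced by the boundary-adapted operator $\Pi_{\mathcal{S}_h}^0$. First I would check that $\Pi_{V_h}$ maps into $V_h$. For $v\in H_0^2(\Omega)$, the pullback $\widehat v=v\circ\mathbf F$ lies in $H_0^2(\widehat\Omega)$, because $\mathbf F$ is a regular bi-Lipschitz map that is smooth on each element and carries $\partial\widehat\Omega$ onto $\partial\Omega$. Hence $\Pi^0_{\mathcal S_h}\widehat v$ is a combination of B-splines belonging to $H_0^2(\widehat\Omega)$, and pushing forward gives an element of $\widetilde V_h\cap H_0^2(\Omega)=V_h$. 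Since $\ell\le r_{\min}+1$, the seminorm $|v-\Pi_{V_h}v|_{\ell,\Omega}$ is well defined, exactly as in Theorem \ref{theo:global_projection_error}.

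The main step is a local estimate analogous to Theorem \ref{theo:local_projection_error}. For an element $\widehat K$ whose support extension $\widetilde{\widehat K}$ does not touch $\partial\widehat\Omega$, every B-spline that is nonzero on $\widehat K$ vanishes to first order on $\partial\widehat\Omega$. Consequently $\Pi^0_{\mathcal S_h}\widehat v=\Pi_{\mathcal S_h}\widehat v$ on $\widehat K$, and Theorem \ref{theo:local_projection_error} applies unchanged. For elements near the boundary, the discarded coefficients are those of the first two rows of B-splines adjacent to each boundary side, since with open knot vectors and $p\ge2$ these are the only functions failing to have zero value or zero normal derivative there. I would handle these elements in one of two ways.

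The first option is to choose the dual functionals $\lambda_{ij}$ as in \cite{Bazilevs2006}, supported on a single element. Near the boundary they can be taken as the boundary-based functionals: point evaluation and derivative evaluation at the boundary knot. These vanish for $\widehat v\in H_0^2$ whenever the trace is defined, i.e. for $s\ge 2$ with a slightly stronger argument in $d=3$.

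The second option, which is more robust, is to show that $\Pi^0_{\mathcal S_h}$ still reproduces all polynomials in $\mathbb P_{p}$ that satisfy the clamped conditions locally. One then bounds the dropped coefficients by local $L^2$/$H^1$ norms of $\widehat v$ through stability of $\lambda_{ij}$. A Poincaré/Friedrichs-type inequality on $\widetilde{\widehat K}$ is applicable because $\widehat v$ and $\partial_\nu\widehat v$ vanish on a face of size $\sim h$, and it gives $|\lambda_{ij}(\widehat v)|\lesssim h^{\min\{p+1,s\}-d/2}|\widehat v|_{\min\{p+1,s\},\widetilde{\widehat K}}$. Since $\|B_{ij}\|_{\ell,\widehat K}\lesssim h^{d/2-\ell}$, the error from the dropped terms is of the correct order. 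Combining this with the interior estimate and a Bramble–Hilbert argument on the support extension yields the local bound of Theorem \ref{theo:local_projection_error} for $\Pi_{V_h}$.

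Finally, I would pass from the parametric to the physical domain using Lemmas 3.3 and 3.5 of \cite{Bazilevs2006}, which give equivalence of seminorms under $\mathbf F$ with constants depending on $\|\nabla\mathbf F\|_{L^\infty}$ and $\|\nabla\mathbf F^{-1}\|_{L^\infty}$, absorbed into $C_{shape}$. Summing the squared local bounds over $K\in\mathcal T_h$ follows Corollary 3.1 of \cite{TAGLIABUE2014277}, using the bounded overlap of support extensions under shape regularity, and gives \eqref{inequality:Global_Projection_error_boundary}.

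I expect the boundary-element estimate to be the main obstacle: showing that discarding the boundary B-splines costs no more than $h^{\min\{p+1,s\}-\ell}$. This rests on the Friedrichs-type bound for the dropped dual coefficients, and in particular on the fact that the clamped conditions on $v$ control both the value and the normal derivative rows.
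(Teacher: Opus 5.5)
Your route is the paper's own. The paper says only that the proof is similar to that of Theorem~\ref{theo:global_projection_error}: a local estimate as in Theorem~\ref{theo:local_projection_error}, transferred through $\mathbf F$ via \cite{Bazilevs2006}, then summed as in Corollary~3.1 of \cite{TAGLIABUE2014277}. It never discusses the elements near $\partial\widehat\Omega$, where $\Pi^0_{\mathcal S_h}$ discards the first two rows of coefficients. Your observation that $\Pi^0_{\mathcal S_h}\widehat v=\Pi_{\mathcal S_h}\widehat v$ on every element whose support extension avoids $\partial\widehat\Omega$ is correct, so only the boundary elements need new work, and you fill in exactly what the paper leaves implicit.

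The step that does not work as written is the bound on the dropped coefficients. The estimate you state, $|\lambda_{ij}(\widehat v)|\lesssim h^{m-d/2}|\widehat v|_{m,\widetilde{\widehat K}}$ with $m=\min\{p+1,s\}$, is true. It cannot, however, come from $L^2$-stability of $\lambda_{ij}$ applied to $\widehat v$ followed by a Friedrichs inequality from $\widehat v=\partial_\nu\widehat v=0$ on the face. That chain gives at best $h^{-d/2}\|\widehat v\|_{0}\lesssim h^{2-d/2}|\widehat v|_{2}$, so the dropped terms contribute $h^{2-\ell}|\widehat v|_{2,\widetilde{\widehat K}}$, which does not decay at all for $\ell=2$. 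Nor can any Friedrichs inequality give $\|\widehat v\|_0\lesssim h^m|\widehat v|_m$ for $m>2$: take $\widehat v=\xi^2$ on a patch at the face $\xi=0$, where $|\widehat v|_3=0$.

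The clamped-polynomial reproduction you mention has to be used at precisely this point. If $q\in\mathbb P_{m-1}\subset\mathcal S_h$ satisfies $q=\partial_\xi q=0$ on $\xi=0$, then its B-spline coefficients with $i=1,2$ vanish, since $q(0,\cdot)=\sum_j c_{1j}M_j$ and $\partial_\xi q(0,\cdot)\propto\sum_j(c_{2j}-c_{1j})M_j$. Hence every dropped functional annihilates $q$, and $\lambda_{ij}(\widehat v)=\lambda_{ij}(\widehat v-q)$. Now $L^2$-stability, combined with a Deny--Lions (Bramble--Hilbert) lemma on the subspace of $H^m(\widetilde{\widehat K})$ with vanishing Cauchy data on the boundary part, gives the order-$m$ bound. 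That subspace's $|\cdot|_m$-kernel is exactly these clamped polynomials; at corner elements, impose the conditions on both faces. Scaling makes the constants uniform under the mesh assumptions of \cite{Bazilevs2006}. With this link made explicit, your second option is complete.

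Two smaller points:
\begin{itemize}
\item Your first option replaces the dual functionals by point and normal-derivative evaluations. These are not $L^2$-stable, so Theorem~\ref{theo:local_projection_error} no longer applies to the modified operator. The local estimate would have to be re-derived with an $H^2$-type stability bound, which is valid only for $s\ge 2$.
\item $v\circ\mathbf F\in H_0^2(\widehat\Omega)$ needs $\mathbf F\in W^{2,\infty}$, i.e.\ globally $C^1$, not merely bi-Lipschitz and elementwise smooth. The paper also assumes this implicitly.
\end{itemize}
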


Similar to Theorem \ref{theo:projection_error_fractional}, we can prove the following result.

\begin{theorem}\label{theo:projection_error_fractional_boundary}
 Let $\ell$ be an integer such that $0\le \ell  \le 2$. If  $\ell \le r_{min} + 1$, then
% there exists a constant $C_{shape}$ such that
we have
\begin{equation}\label{inequality:Projection_error_fractional_boundary}
    \| v - \Pi_{{V}_h} v  \|_{\ell,\Omega} \le C_{shape} h^{ 2+ \sigma  - \ell}\|v\|_{2+\sigma,\Omega} \qquad  \forall v \in  H^{2+\sigma}(\Omega) \cap H_0^2(\Omega) \quad \mbox{with} \quad \sigma \in (0,1].
\end{equation}
\end{theorem}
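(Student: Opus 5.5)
The plan follows the proof of Theorem \ref{theo:projection_error_fractional}, with $\Pi_{\widetilde{V}_h}$ replaced by $\Pi_{V_h}$ and the spaces restricted to $H_0^2(\Omega)$. We take $p=q=2$ and treat $\sigma=1$ and $\sigma\in(0,1)$ separately.

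For $\sigma=1$, apply Theorem \ref{theo:global_projection_error_boundary} with $s=3$ and each integer $0\le j\le \ell$. Since $j\le\ell\le r_{\min}+1$ and $\min\{p+1,3\}=3$, this gives $|v-\Pi_{V_h}v|_{j,\Omega}\le C_{shape}h^{3-j}\|v\|_{3,\Omega}$. Because $h\le \operatorname{diam}(\Omega)$, which is fixed, we have $h^{3-j}\le C h^{3-\ell}$ for $j\le \ell$. Summing the seminorms then gives the full norm bound with rate $h^{3-\ell}$. The same argument with $s=2$ gives $\|v-\Pi_{V_h}v\|_{\ell,\Omega}\le C_{shape}h^{2-\ell}\|v\|_{2,\Omega}$ for all $v\in H_0^2(\Omega)$.

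For $\sigma\in(0,1)$, interpolate. Set $E_0:=H_0^2(\Omega)$, with the $H^2$ norm, and $E_1:=H^3(\Omega)\cap H_0^2(\Omega)$, with the $H^3$ norm. The operator $T=I-\Pi_{V_h}$ is linear and defined on $E_0$. The previous step gives
\[
\|T\|_{\mathcal{L}(E_0,H^\ell(\Omega))}\le C_{shape}h^{2-\ell},\qquad \|T\|_{\mathcal{L}(E_1,H^\ell(\Omega))}\le C_{shape}h^{3-\ell}.
\]
The interpolation inequality for the real method then yields $\|T\|_{\mathcal{L}([E_0,E_1]_{\sigma,2},H^\ell(\Omega))}\le C_{shape}h^{(2-\ell)(1-\sigma)+(3-\ell)\sigma}=C_{shape}h^{2+\sigma-\ell}$.

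The main obstacle is identifying $[E_0,E_1]_{\sigma,2}$. Without boundary conditions this is $H^{2+\sigma}(\Omega)$, but here we need the continuous embedding $H^{2+\sigma}(\Omega)\cap H_0^2(\Omega)\hookrightarrow [E_0,E_1]_{\sigma,2}$, with constant independent of $h$. First attempt: build a bounded linear operator $\mathcal{R}$ that maps $H^{2+t}(\Omega)$ into $H^{2+t}(\Omega)\cap H_0^2(\Omega)$ for $t=0,1$ and fixes $H_0^2$ functions. Interpolating $\mathcal{R}$ between the two pairs would show that $E_0,E_1$ form a retract of $(H^2,H^3)$, which gives the identification. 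A natural candidate is $\mathcal{R}v=v-\mathcal{E}(v|_{\partial\Omega},\partial_{\bm\nu}v|_{\partial\Omega})$, with $\mathcal{E}$ a right inverse of the trace map that is bounded on both scales. For smooth or piecewise smooth boundaries this can be done through the NURBS chart $\mathbf{F}$: pull back to $\widehat{\Omega}$ and use a tensor-product extension on the square.

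Fallback, if this identification is too delicate on Lipschitz $\Omega$: use the K-functional directly. For any splitting $v=v_0+v_1$ with $v_0\in E_0$, $v_1\in E_1$,
\[
\|Tv\|_{\ell,\Omega}\le C h^{2-\ell}\bigl(\|v_0\|_{2,\Omega}+h\|v_1\|_{3,\Omega}\bigr).
\]
Take the infimum over splittings to get $C h^{2-\ell}K(h,v)$. The remaining step is to bound $K(h,v)\le C h^\sigma\|v\|_{2+\sigma,\Omega}$ by an $H_0^2$-preserving regularization, such as mollification in the parametric domain composed with the retraction $\mathcal{R}$ above. Conclude that $\|v-\Pi_{V_h}v\|_{\ell,\Omega}\le C_{shape}h^{2+\sigma-\ell}\|v\|_{2+\sigma,\Omega}$.
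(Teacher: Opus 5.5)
Your route is the one the paper intends by ``similar to Theorem~\ref{theo:projection_error_fractional}'': $\sigma=1$ from Theorem~\ref{theo:global_projection_error_boundary} with $p=2$, $s=3$, and $\sigma\in(0,1)$ by interpolating the $h^{2-\ell}$ and $h^{3-\ell}$ bounds. You are also right that the paper's appeal to $[H^2(\Omega),H^3(\Omega)]_{\sigma,2}=H^{2+\sigma}(\Omega)$ does not literally apply, because $I-\Pi_{V_h}$ satisfies these bounds only on $H_0^2(\Omega)$. However, the step you yourself call the main obstacle is left open. Your fallback does not avoid it either, since it is built from the same retraction $\mathcal{R}$.

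The construction you sketch for $\mathcal{R}$ would fail as stated, for two reasons.
\begin{itemize}
\item A tensor-product lift such as $\phi(\xi)\,g(\eta)$ of the side traces is not bounded into $H^2(\widehat\Omega)$. For $\hat v\in H^2$, the trace $g=\partial_\xi\hat v(0,\cdot)$ lies only in $H^{1/2}(0,1)$, while $\partial_\eta^2(\phi g)\in L^2$ would require $g\in H^2(0,1)$. A genuine lifting needs tangential smoothing at the scale of the distance to the side, and on the square it must also respect corner compatibility conditions.
\item Transporting $\mathcal{R}$ through $\mathbf F$ preserves $H^3$ only if $\mathbf F\in W^{3,\infty}(\widehat\Omega)$ globally. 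The paper assumes $\mathbf F$ smooth only elementwise, and, e.g., a quadratic NURBS geometry with simple interior knots is only $C^1$.
\end{itemize}
On a smooth $\partial\Omega$, a standard simultaneous right inverse of $v\mapsto(v|_{\partial\Omega},\partial_{\bm\nu}v|_{\partial\Omega})$ does give $\mathcal{R}$. But the square, L-shaped and cube examples are precisely the nonsmooth case.

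A short way to close the gap avoids constrained interpolation altogether. Let $\hat v:=v\circ\mathbf F$ and $c_{ij}:=\lambda_{ij}(\hat v)$, and write $v-\Pi_{V_h}v=(v-\Pi_{\widetilde V_h}v)+w_h$, where
\[
w_h:=\sum_{B_{i,j}^{(p,q)}\notin H_0^2(\widehat\Omega)}c_{ij}\,B_{i,j}^{(p,q)}\circ\mathbf F^{-1}
\]
collects the dropped boundary B-splines.

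The first term is handled by Theorem~\ref{theo:projection_error_fractional}. Its interpolation argument is legitimate, because $\Pi_{\widetilde V_h}$ is defined and accurate on all of $H^2(\Omega)$ and $H^3(\Omega)$.

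For $w_h$, take for instance the side $\widehat\Gamma_0=\{\xi=0\}$ and set $\hat s_h:=\Pi_{S_h}\hat v$. Open knot vectors give $\hat s_h(0,\eta)=\sum_j c_{1j}M_{j,q}(\eta)$ and $\partial_\xi\hat s_h(0,\eta)=\frac{p}{\xi_{p+2}}\sum_j(c_{2j}-c_{1j})M_{j,q}(\eta)$. On quasi-uniform meshes, B-spline stability therefore yields
\[
\sum_j(|c_{1j}|^2+|c_{2j}|^2)\le C\bigl(h^{-1}\|\hat s_h\|_{0,\widehat\Gamma_0}^2+h\|\partial_\xi\hat s_h\|_{0,\widehat\Gamma_0}^2\bigr).
\]
Since $\hat v=\partial_\xi\hat v=0$ on $\widehat\Gamma_0$, both traces are traces of $\hat e:=\hat v-\hat s_h=e\circ\mathbf F$, where $e:=v-\Pi_{\widetilde V_h}v$. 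The elementwise trace inequality and $|B_{i,j}^{(p,q)}|_{\ell,\widehat\Omega}\le Ch^{1-\ell}$ then give
\[
\|w_h\|_{\ell,\Omega}\le C\bigl(h^{-\ell}\|e\|_{0,\Omega}+h^{1-\ell}\|e\|_{1,\Omega}+h^{2-\ell}\|e\|_{2,\Omega}\bigr)\le C\,h^{2+\sigma-\ell}\|v\|_{2+\sigma,\Omega},
\]
where the last step is Theorem~\ref{theo:projection_error_fractional} applied with $\ell=0,1,2$. The remaining sides are treated identically.
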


\section{Spectral approximation and error estimates}\label{sec:error_estimate}

In this section, we derive the optimal error estimate of isogeometric discretization for the 
linear transmission eigenvalue problem \eqref{eq:linear_eigenvalue_problem}.
We first prove the convergence of $T_h$ to $T$ in operator norm as the global mesh size $h$ tends to zero.

\begin{lemma} 
\label{lemma:convergence of operator}
There exists a positive constant $C$ depending on the index of refraction $n$ and the shape of domain $\Omega$, but independent of $h$,  such that
\begin{equation}\label{ineq:T_T_h_error}
       \| T_h - T \|_{\mathcal{L}(\mathbb{H})} \le C h^{\sigma}, \quad \mbox{for} \quad h \le 1, 
\end{equation}
where $\sigma\in (0,1]$ is the parameter appearing in Lemma~\ref{lemma:regularity_result}.
\begin{proof}
    % Let $(f,g)\in \mathbb{H}$ such that $T(f,g)=(\Tilde{u},\Tilde{z})$ and $T_h(f,g)=(\Tilde{u}_h,\Tilde{z}_h)$ with
    % \begin{align}
    %     A(T(f,g),(v,w)) &= B((f,g),(v,w)),\quad \forall (v,w)\in \mathbb{H}, \label{Operator sesqualinear form} \\
    %     A(T_h(f,g),(v_h,w_h)) &= B((f,g),(v_h,w_h)),\quad \forall (v_h,w_h)\in \mathbb{H}_h.
    % \end{align}
    % Since $\mathbb H_h\subset \mathbb H$, set $(v_h,w_h)$ as the second component in Eq.\ref{Operator sesqualinear form} and subtracting gives
    For any given $(f,g)\in \mathbb{H}$, using the Galerkin orthogonality property
    \begin{align}
        A\big(\,T(f,g)-T_h(f,g),(v_h,w_h)\,\big) = 0 \qquad \forall (v_h,w_h)\in\mathbb H_h\subset \mathbb{H},
    \end{align}
    and applying the coercivity and boundedness of the sesquilinear form $A(\cdot,\cdot)$ (see Lemma \ref{lemma_A_B}), we have
    \begin{equation}
    \begin{aligned}
        \beta \| T(f,g) - T_h(f,g)  \|_{\mathbb{H}}^2 \le & A\big(\, T(f,g) - T_h(f,g), T(f,g) - T_h(f,g)\, \big) \\
        = & A\big(\, T(f,g) - T_h(f,g), T(f,g) - (v_h,w_h)\, \big) \\
        \le & C \|  T(f,g) - T_h(f,g)  \|_{\mathbb{H}} \, \|  T(f,g) - (v_h,w_h) \|_{\mathbb{H}} \quad \forall (v_h,w_h)\in \mathbb{H}_h,
    \end{aligned}
    \end{equation}
where $\beta$ and $C$ are the constants appearing in the coercivity and boundedness of the sesquilinear form $A(\cdot,\cdot)$, respectively.
Consequently,  we obtain the following error estimate result
\begin{equation}\label{inequality_Cea_Lemma}
\|  T(f,g) - T_h(f,g)  \|_{\mathbb{H}} \le \frac{C}{\beta}\inf\limits_{(v_h,w_h)\in \mathbb{H}_h} \|  T(f,g) - (v_h,w_h) \|_{\mathbb{H}},  
\end{equation}

It follows from  Lemma  \ref{lemma:regularity_result} that $(\widetilde{f},\widetilde{g})= T(f,g)\in H^{2+\sigma}(\Omega)\times H_0^2(\Omega)$, where $\sigma\in (0,1]$. For the inequality \eqref{inequality_Cea_Lemma}, if we set $(v_h,w_h)= ( \Pi_{ {V}_h} \widetilde{f}, \Pi_{{V}_h} \widetilde{g})\in \mathbb{H}_h$, and use the spline approximation result \eqref{inequality:Projection_error_fractional_boundary},  then we can obtain that
\begin{equation}\label{ineq:T_T_h_f_g}
\begin{aligned}
 \|  T(f,g) - T_h(f,g)  \|_{\mathbb{H}} & \le \frac{C}{\beta}\| (\widetilde{f},\widetilde{g}) -  ( \Pi_{V_h} \widetilde{f}, \Pi_{V_h} \widetilde{g}) \|_{\mathbb{H}} \le  \frac{C}{\beta} \big(  \|  \widetilde{f} - \Pi_{V_h} \widetilde{f} \|_{2,\Omega} +   \|  \widetilde{g} - \Pi_{V_h} \widetilde{g} \|_{0,\Omega} \big)\\
 &\le \frac{C}{\beta} \big(   C_{shape} h^{\sigma} \| \widetilde{f} \|_{2+\sigma,\Omega} + C_{shape} h^2 \|  \widetilde{g} \|_{2,\Omega} \big). 
 \end{aligned}
\end{equation}

By using  Lemma \ref{lemma:regularity_result} and  inequality \eqref{ineq:T_T_h_f_g}, we can obtain the error estimate \eqref{ineq:T_T_h_error}.

\end{proof}
\end{lemma}

Similarly, we can also prove the convergence of $T_h^*$ to $T^*$ in the operator norm as the mesh size $h$ goes to zero.

\begin{lemma}
\label{lemma:convergence of adjoint operator}
There exists a positive constant $C$ depending on the index of refraction $n$ and the shape of domain $\Omega$, but independent of $h$, such that
\begin{equation}
       \|  T_h^* - T^*  \|_{\mathcal{L}(\mathbb{H})} \le C h^{\sigma},  
\end{equation}
where $\sigma\in (0,1]$ is the parameter appearing in Lemma~\ref{lemma:regularity_result}.
\end{lemma}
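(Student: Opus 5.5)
The plan is to mirror the proof of Lemma \ref{lemma:convergence of operator}, with the roles of the two arguments of $A(\cdot,\cdot)$ interchanged. Fix $(f^*,g^*)\in\mathbb{H}$ and write $(\widetilde{f}^*,\widetilde{g}^*) := T^*(f^*,g^*)$ and $(\widetilde{f}^*_h,\widetilde{g}^*_h) := T_h^*(f^*,g^*)$. The discrete adjoint problem is posed on $\mathbb{H}_h\subset\mathbb{H}$, so subtracting it from \eqref{eq:adjoint_source_problem} gives the adjoint Galerkin orthogonality
\[
A\big((v_h,w_h),\, T^*(f^*,g^*)-T_h^*(f^*,g^*)\big)=0 \qquad \forall (v_h,w_h)\in\mathbb{H}_h .
\]
By the Hermitian symmetry of $A$ in Lemma \ref{lemma_A_B}, this is equivalent to $A\big(T^*(f^*,g^*)-T_h^*(f^*,g^*),(v_h,w_h)\big)=0$. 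The orthogonality therefore has exactly the same form as in the primal case.

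Next I would set $e:=T^*(f^*,g^*)-T_h^*(f^*,g^*)$ and use coercivity, orthogonality and boundedness of $A$ from Lemma \ref{lemma_A_B}. This gives
\[
\beta\|e\|_{\mathbb{H}}^2\le A(e,e)=A\big(e,\,T^*(f^*,g^*)-(v_h,w_h)\big)\le C\|e\|_{\mathbb{H}}\,\|T^*(f^*,g^*)-(v_h,w_h)\|_{\mathbb{H}}
\]
for every $(v_h,w_h)\in\mathbb{H}_h$, which is Céa's estimate
\[
\|e\|_{\mathbb{H}}\le \tfrac{C}{\beta}\inf_{(v_h,w_h)\in\mathbb{H}_h}\|(\widetilde{f}^*,\widetilde{g}^*)-(v_h,w_h)\|_{\mathbb{H}} .
\]

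The only ingredient that changes is the regularity. Lemma \ref{lemma:adjoint_regularity_result} replaces Lemma \ref{lemma:regularity_result} and gives $\widetilde{f}^*\in H^{2+\sigma}(\Omega)\cap H_0^2(\Omega)$ and $\widetilde{g}^*\in H_0^2(\Omega)$, with $\|\widetilde{f}^*\|_{2+\sigma,\Omega}+\|\widetilde{g}^*\|_{2,\Omega}\le C\|(f^*,g^*)\|_{\mathbb{H}}$. I would take $(v_h,w_h)=(\Pi_{V_h}\widetilde{f}^*,\Pi_{V_h}\widetilde{g}^*)$. Theorem \ref{theo:projection_error_fractional_boundary} with $\ell=2$ bounds the first component by $C_{shape}h^{\sigma}\|\widetilde{f}^*\|_{2+\sigma,\Omega}$. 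Theorem \ref{theo:global_projection_error_boundary} with $\ell=0$, $s=2$ bounds the second component in $L^2$ by $C_{shape}h^2\|\widetilde{g}^*\|_{2,\Omega}$. Since $h\le1$ and $\sigma\le 2$, we have $h^2\le h^\sigma$. Combining the two bounds and taking the supremum over $\|(f^*,g^*)\|_{\mathbb{H}}=1$ yields $\|T_h^*-T^*\|_{\mathcal{L}(\mathbb{H})}\le Ch^{\sigma}$.

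I expect the main obstacle to be the regularity exponent, not the Galerkin argument. The adjoint regularity lemma is stated with its own $\sigma$, and the claim uses the $\sigma$ of Lemma \ref{lemma:regularity_result}. I would resolve this by taking the minimum of the two exponents. Both lemmas come from the same elliptic biharmonic-type regularity on $\Omega$ with the same coefficient $n$, so this minimum can be identified with the common $\sigma$ of Lemma \ref{lemma:regularity_result}. A secondary check is that the conjugate-linearity of $A$ in its second argument does not spoil the orthogonality step. It does not, because the orthogonality relation is an identity that equals zero, so applying Hermitian symmetry preserves it.
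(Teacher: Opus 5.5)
Your proposal is correct and is essentially the argument the paper means when it says the adjoint case follows ``similarly'' to Lemma~\ref{lemma:convergence of operator}: adjoint Galerkin orthogonality, C\'ea's estimate from the coercivity and boundedness of $A$, then Lemma~\ref{lemma:adjoint_regularity_result} in place of Lemma~\ref{lemma:regularity_result}, together with the approximation estimates for $\Pi_{V_h}$ (an $h^{\sigma}$ bound in $H^2$ for $\widetilde f^*$ and an $h^2$ bound in $L^2$ for $\widetilde g^*$). Your handling of the exponent matches the paper's implicit convention that the two regularity lemmas share a common $\sigma$; one can always arrange this by replacing both exponents with the smaller one.
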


% We are now in a position to prove the optimnal error estimates for the eigenvalues and eigenfunctions. % To do this, we will apply the theory of Babuška and Osborn \cite{BABUSKA1991641}.

Let $\mu:=\lambda^{-1}$ be a non-zero eigenvalue of the operator $T$ with algebraic multiplicity $m$, and let $\Gamma$ be a circle in the complex plane centered at $\mu$ that encloses no other eigenvalues. We recall that the Riesz spectral projector associated with $T$ and $\mu$ is defined by 
\begin{equation}
    E = E(\mu) =  \frac{1}{2\pi i}\int_{\Gamma} (z - T)^{-1}{\rm d}z.
\end{equation}
This operator is a projection onto the space of generalized eigenvectors related to $T$ and $\mu$, that is, $R(E) = N((\mu - T)^\alpha)$, where $\alpha$ is the ascent of $\mu - T$, and $R(\cdot)$ and $N(\cdot)$ denote the range and kernel, respectively. Therefore, we have dim$R(E) = m$, and it follows from Lemma \ref{lemma:regularity_result} that $R(E)\subset H^{2+\sigma}(\Omega)\times H_0^2(\Omega) $.

Similarly, let $\Gamma^*$ be a circle in the complex plane centered at $\overline{\mu}$ that encloses no other eigenvalues, the spectral projector $E^*$ related to the dual solution operator $T^*$ and $\overline{\mu}$ is defined as
\begin{equation}
   E^* =  \frac{1}{2\pi i}\int_{\Gamma^{*}} (z - T^*)^{-1}{\rm d}z,
\end{equation}
and it holds that $R(E^*)\subset H^{2+\sigma}(\Omega)\times H_0^2(\Omega) $.
% denoted by $R(E)$ and $R(E^*)$, where $R$ denotes the range.

Since $T_h \to T$ in operator norm as $h\to 0$, there are $m$ eigenvalues $\mu_{1,h},\ldots, \mu_{m,h}$ of $T_h$ that converge to $\mu$ as $h\to 0$.
For $h$ sufficiently small, we define the Riesz spectral projection associated with $T_h$ and the eigenvalues of $T_h$ lying in $\Gamma$ by
\begin{align}
    E_h  =  \frac{1}{2\pi i}\int_{\Gamma} (z-T_h)^{-1}\,{\rm d}z,
\end{align}
which  is a projection onto the space spanned by the spaces of generalized eigenvectors of $T_h$ corresponding to $\mu_{1,h},\ldots, \mu_{m,h}$. 
% Although each eigenvalue $\mu_{h,j}$, $j=1,\ldots,m$, converges to $\mu$
%as $h\to0$, their arithmetic mean generally yields a closer approximation
% to \(\mu\), see~\cite{BABUSKA1991641}. 

% \textcolor{red}{The definition of $R(E_h)$???}

We recall the definition of the gap between closed subspaces. Let
$\mathbb X$ and $\mathbb Y$ be closed subspaces of the Hilbert space $\mathbb H$, the gap
between $\mathbb X$ and $\mathbb Y$ is defined by
\begin{align}
    \widehat{\delta}(\mathbb X,\mathbb Y)
    :=
    \max\{\delta(\mathbb X,\mathbb Y),\delta(\mathbb Y,\mathbb X)\},
\end{align}
where
\[
    \delta(\mathbb X,\mathbb Y)
    :=
    \sup_{\substack{x\in \mathbb X\\ \|x\|_{\mathbb H}=1}}
    \delta(x,\mathbb Y), \quad \mbox{and} 
    \quad
    \delta(x,\mathbb Y)
    :=
    \inf_{y\in \mathbb Y}\|x-y\|_{\mathbb H}.
\]

The optimal error estimates for the eigenvalues and  eigenfunctions are established in the following theorem.

\begin{theorem}
    There exists a positive constant $C$ depending on the index of refraction $n$ and the shape of domain $\Omega$, but independent of $h$,  such that
\begin{align}
\widehat\delta(R(E),R(E_h))&\leq C h^{\sigma}, \label{eq:eigenfunction convergence order} \\  
|\mu - \widehat\mu_h| &\le C h^{2\sigma}, \label{eq:eigenvalue convergence order}
\end{align}
where $\sigma\in (0,1]$ is the parameter appearing in Lemma~\ref{lemma:regularity_result}, and 
\begin{align}
    \widehat{\mu}_h
    =
    \frac{1}{m}\sum_{j=1}^{m}\mu_{h,j}.
\end{align}
\begin{proof}
It follows from Theorem 7.1 of \cite{BABUSKA1991641} that there exists a constant $C$ independent of $h$, such that
\begin{align}\label{ineq:delta_R_E}
\widehat\delta(R(E),R(E_h)) \leq C \| (T - T_h)|_{R(E)} \|.
    \end{align}
    % Eq.(\ref{eq:eigenfunction convergence order}) is obtained from Eq.(\ref{eq:inequality_proof_estimate}) by taking $(f,g)\in R(E)$.
Combining \eqref{ineq:T_T_h_error}  and \eqref{ineq:delta_R_E}, we can obtain the error estimate \eqref{eq:eigenfunction convergence order}. 

Next, we proceed to prove \eqref{eq:eigenvalue convergence order}. Since the sesquilinear form $A(\cdot,\cdot)$ defines an inner product on the Hilbert space $\mathbb{H}$, if $\{(u_i,z_i)\}_{i=1}^m$ forms a basis for $R(E)$, then there exists a unique dual basis $\{(u_i^*,z_i^*)\}_{i=1}^m \subset R(E^*)$,  satisfying
\begin{equation}
  \langle (u_i,z_i), (u_j^*,z_j^*) \rangle = A\left((u_i,z_i), (u_j^*,z_j^*)\right) = \delta_{ij},
  \qquad i,j = 1,\ldots,m,
\end{equation}
where $\langle \cdot, \cdot \rangle $ is the Hilbert space duality pairing.

From Theorem 7.2 of \cite{BABUSKA1991641}, we have 
\begin{align}\label{inequality:mu_mu_h_error}
        |\mu-\widehat\mu_h|
  \leq  \frac{1}{m}
   \Big(
  \sum_{j=1}^{m}|\langle ( T-T_h)(u_j,z_j),(u_j^*,z_j^*)\rangle|
  +
  \| (T-T_h)|_{R(E)} \|\,
  \| (T^*-T_h^*)|_{R(E^*)} \|
  \Big).
    \end{align}
    
On the one hand, it follows from Lemmas \ref{lemma:convergence of operator} and \ref{lemma:convergence of adjoint operator} that 
\begin{equation}\label{T_Th_T*_Th*_error}
     \| (T-T_h)|_{R(E)} \|\,
  \| (T^*-T_h^*)|_{R(E^*)} \| \le C h^{2\sigma}.
\end{equation}

On the other hand, by the Galerkin orthogonality property and the boundedness of $A(\cdot,\cdot)$, we have 
\begin{equation}\label{inequality:T_T_h_dual}
\begin{aligned}
    \langle (T-T_h)(u_j,z_j),(u_j^*,z_j^*) \rangle &= A((T-T_h)(u_j,z_j),(u_j^*,z_j^*)) \\
    & = A((T-T_h)(u_j,z_j),(u_j^*,z_j^*)-(\Pi_{h}u_j^*,\Pi_{h}z_j^*)) \\
    &\leq C \| (T-T_h)(u_j,z_j) \|_{\mathbb H}\| (u_j^*,z_j^*)-(\Pi_{h}u_j^*,\Pi_{h}z_j^*) \|_{\mathbb H} \\
    &= C\| (T-T_h)(u_j,z_j)\|_{\mathbb H}\|(u_j^*-\Pi_{h}u_j^*,z_j^*-\Pi_{h}z_j^*)\|_{\mathbb H} \\
    &\le C \|  T - T_h \|_{\mathcal{L(\mathbb{H})}}\| (u_j,z_j) \|_{\mathbb{H}}\cdot \big( \| u_j^*-\Pi_{V_h}u_j^*  \|_{2,\Omega} + \|  z_j^*-\Pi_{V_h}z_j^* \|_{0,\Omega} \big).
\end{aligned}
\end{equation}

The error estimate \eqref{eq:eigenvalue convergence order} follows from \eqref{inequality:mu_mu_h_error}, \eqref{T_Th_T*_Th*_error},  \eqref{inequality:T_T_h_dual}, and  Theorem \ref{theo:projection_error_fractional_boundary} and Lemma \ref{lemma:convergence of operator}.
\end{proof}
\end{theorem}
% \begin{remark}
%     As stated in Remark.\ref{Remark for B-spline degree}, it is valid for the convergence of adjoint operator as well. As a consequence, we have
%     \begin{align}
%         \widehat\delta(R(E),R(E_h))&\leq C_n h^{\min\{p-1,s\}},\label{eq:eigenfunction convergence order} \\
%            |\mu - \widehat\mu_h| &\le C_n h^{\min\{2(p-1),2s\}}.
%     \end{align}

Note that the above optimal error estimates are derived under the assumption that the
solution of the source problem \eqref{eq:source_problem} has low
regularity, that is, $(\widetilde{f},\widetilde{g}) \in H^{2+\sigma}(\Omega)\times H_0^2(\Omega)$ with $0 < \sigma \le 1$. The error estimates for eigenvalues and eigenfunctions of the transmission eigenvalue problem with low-regularity solutions have also been analyzed in the framework
of $C^1$ virtual element methods
\cite{mora2018virtual,mora2021virtual}. However, the high-regularity case was not considered there.
For the source problem \eqref{eq:source_problem}, when the functions $f$ and $g$, the index of refraction $n$, and the boundary of domain $\Omega$ are sufficiently smooth, its solution $(\widetilde{f},\widetilde{g})$ would also be sufficiently smooth. In this respect, we have the following theorem.

\begin{theorem}
Assume that there exists an integer $r\ge 2+ \sigma$ ($0<\sigma \le 1$) such that
\begin{equation}\label{inclusion:assumption}
R(E),\,\, R(E^*) \subset H^{r}(\Omega) \times H^{r}(\Omega), 
    \end{equation}
then we have
\begin{equation}
\widehat\delta(R(E),R(E_h)) \leq C h^{\min\{ p-1,r - 2 \}},
\end{equation}
and
\begin{equation}
\label{eq:eigenfunction convergence order smooth} 
|\mu - \widehat\mu_h|  \le C h^{2\min\{ p-1,r - 2 \}},
\end{equation}
where  $C$ is a constant depending on the index of refraction and the domain $\Omega$, but independent of $h$.
\begin{proof}
%  By the fact $R(E)\subset R(T)$ and \eqref{inclusion:assumption}, we have
% \begin{equation}
% R(E)\subset H^{r}(\Omega)\times H^{r}(\Omega),\qquad r\ge 2+\sigma.
% \end{equation}
For any $(u,z)\in R(E)$, we set $(\widetilde f,\widetilde g):=T(u,z)$. Since $R(E)$ is invariant under $T$, we have $(\widetilde f,\widetilde g)\in R(E)$.

On the finite-dimensional space $R(E)$ (as the dimension of $R(E)$ is $m$), all norms are equivalent. Hence there exists a constant $C>0$, independent of $h$, such that
\begin{equation}\label{ineq:norm_equivalence}
\|(\widetilde f,\widetilde g)\|_{H^r(\Omega)\times H^r(\Omega)}\le C\|(\widetilde f,\widetilde g)\|_{\HH}= C\|T(u,z)\|_{\HH}.
\end{equation}
By \eqref{ineq:norm_equivalence} and the boundedness of $T:\HH\to\HH$, we obtain that
\begin{equation}\label{inequa:f_g_u_z_bound}
\|(\widetilde f,\widetilde g)\|_{H^r(\Omega)\times H^r(\Omega)}
\le C\|T\|_{\LL(\HH)}\|(u,z)\|_{\HH}. 
\end{equation}

Now, for any $(u,z)\in R(E)$, let $(\widetilde f_h,\widetilde g_h):=T_h(u,z)$.  Similar to the derivation of \eqref{ineq:T_T_h_f_g}, we can derive that
\begin{equation}\label{ineq:T_T_h_f_g_smooth}
\begin{aligned}
 \|  T(f,g) - T_h(f,g)  \|_{\mathbb{H}}  \le \frac{C}{\beta} \big(   C_{shape} h^{ \min\{ p+1,r \}-2 } \| \widetilde{f} \|_{r,\Omega} + C_{shape} h^{\min\{ p+1,r \}} \|  \widetilde{g} \|_{r,\Omega} \big). 
 \end{aligned}
\end{equation}

%The Galerkin orthogonality and the coercivity of $A(\cdot,\cdot)$ imply
% \begin{equation}
% \|T(u,z)-T_h(u,z)\|_{\HH}
% \le C\inf_{(v_h,w_h)\in\HH_h}\|T(u,z)-(v_h,w_h)\|_{\HH}. \tag{2}
% \end{equation}
% Choosing $(v_h,w_h)=(\Pi_h\widetilde f,\Pi_h\widetilde g)$, where $\Pi_h$ is the B-spline projector from Theorem~3.2, we obtain
% \begin{equation}
% \begin{aligned}
% \|T(u,z)-T_h(u,z)\|_{\HH}
% &\le C\left(\|\widetilde f-\Pi_h\widetilde f\|_{2,\Omega}
% +\|\widetilde g-\Pi_h\widetilde g\|_{0,\Omega}\right).
% \end{aligned} \tag{3}
% \end{equation}
% By Theorem~3.2, for $\ell=2$ and $\ell=0$, respectively,
% \begin{equation}
% \|\widetilde f-\Pi_h\widetilde f\|_{2,\Omega}
% \le C h^{\min\{p+1,r\}-2}\|\widetilde f\|_{r,\Omega},
% \end{equation}
% \begin{equation}
% \|\widetilde g-\Pi_h\widetilde g\|_{0,\Omega}
% \le C h^{\min\{p+1,r\}}\|\widetilde g\|_{r,\Omega}.
% \end{equation}
% Since $r\ge 2+\sigma>2$ and $p\ge2$, we have
% \begin{equation}
% \min\{p+1,r\}\ge \min\{p-1,r-2\}+2,
% \end{equation}
% so both estimates are bounded by
% \begin{equation}
% C h^{\min\{p-1,r-2\}}\left(\|\widetilde f\|_{r,\Omega}+\|\widetilde g\|_{r,\Omega}\right).
% \end{equation}
Combining \eqref{inequa:f_g_u_z_bound} and \eqref{ineq:T_T_h_f_g_smooth}, we have
\begin{equation}
\|T(f,g)-T_h(f,g)\|_{\HH}
\le C h^{\min\{p-1,r-2\}}\|(f,g)\|_{\HH}, 
\end{equation}
then we can conclude that
\begin{equation}\label{inequality:T_smooth}
\|(T-T_h)|_{R(E)}\|_{\LL(\HH)}
\le C h^{\min\{p-1,r-2\}}.  
\end{equation}
The same argument applies to the adjoint solution operator $T^*$, yielding
\begin{equation}\label{inequality:T*_smooth}
\|(T^*-T_h^*)|_{R(E^*)}\|_{\LL(\HH)}
\le C h^{\min\{p-1,r-2\}}.  
\end{equation}

By using \eqref{ineq:delta_R_E} and \eqref{inequality:T_smooth}, we can derive that
\begin{equation}
\widehat{\delta}(R(E),R(E_h)) \le C h^{\min\{p-1,r-2\}}.
\end{equation}

By combining \eqref{inequality:mu_mu_h_error}, \eqref{inequality:T_smooth}, \eqref{inequality:T*_smooth}, \eqref{inequality:T_T_h_dual}, and Theorem \ref{theo:global_projection_error_boundary}, we can achieve the error estimate \eqref{eq:eigenfunction convergence order smooth}.

\end{proof}
\end{theorem}

%  \newpage
\section{Numerical examples}\label{sec:examples}

In this section, we present several two- and three-dimensional numerical examples to validate our theoretical results, and show the advantages of IGA over some existing numerical methods for the transmission eigenvalue problem. Since the exact transmission eigenvalues are unavailable, following \cite{yang2016mixed,meng2023mixed}, we use the convergence of the relative error $R.E.(h)$ for the transmission eigenvalues with respect to the mesh size $h$  to estimate the convergence order, where 
the relative error $R.E.(h)$ is defined as
\[
R.E.(h_i) = \frac{|k_{j,h_i}-k_{j,h_{i+1}}|}{|k_{j,h_{i+1}}|},\]
with $k_{j,h_i}$ being the $j$-th transmission eigenvalue achieved by IGA with physical mesh size $h_i$ (see \eqref{eq:mesh_size_h}), and $h_i>h_{i+1}$. The computed convergence order is given by
\[
\mbox{Order} = \frac{\log\big(\, R.E.(h_i) / R.E.(h_{i+1})\, \big)}{\log (h_i/h_{i+1})}.
\]
For all the following numerical examples, the mesh of the physical domain is obtained by a uniform knot refinement in the parametric domain, which is then geometrically mapped to the physical domain via the NURBS geometric mapping.

\subsection{Example 1: Square domain with constant index of refraction}

In the first numerical example, we consider the transmission eigenvalue problem in a square domain $\Omega = (0,1)^2$, where the index of refraction is $n=4$. We show the first four transmission eigenvalues computed using the isogeometric method on five successively refined meshes with degrees $p=2$ and $p=3$ in Tables \ref{tab:k-case1-p2} and \ref{tab:k-case1-p3}, respectively. We observe that the computed eigenvalues agree well with those reported in \cite{meng2023mixed,HAN201796,mora2021virtual,Mora2018}. Furthermore, it is found that the computed real eigenvalues are monotonically decreasing as the mesh size $h$ becomes smaller, which implies that the convergence of our method. The convergence of the relative error of the first four transmission eigenvalues with respect to  $h$ is displayed in Figure~\ref{fig:example_1_n_4}, which successfully verifies our theoretical result, i.e., the convergence order of the error in eigenvalues is $2(p-1)$, where $p\ge 2$.

\begin{table}[!htbp]
\centering
\caption{The first four eigenvalues obtained by IGA with $p=2$, where $\Omega = (0,1)^2$ and $n = 4$.}
% \tiny
\label{tab:k-case1-p2}
% \resizebox{\textwidth}{!}{%
\begin{tabular}{ccccc}
\toprule
$h$ & DOFs & $k_{1,h}\,\, \&\,\, k_{2,h}$  & $k_{3,h}$ & $k_{4,h}$ \\
\midrule
$\frac{\sqrt{2}}{16}$ &  648 &  4.265612797601  $\pm$  1.157188956535i  &  5.631839451281  &   5.631839451281  \\ 
$\frac{\sqrt{2}}{32}$ &  2312 &  4.269984960633  $\pm$  1.149821012738i  &  5.512960721511  &   5.512960721511  \\ 
$\frac{\sqrt{2}}{64}$ &  8712 &  4.271256693144  $\pm$  1.148026534850i  &  5.485207606174  &   5.485207606185  \\ 
$\frac{\sqrt{2}}{128}$ &  33800 &  4.271586054714  $\pm$  1.147581420868i  &  5.478376425740  &   5.478376425759  \\ 
$\frac{\sqrt{2}}{256}$ &  133128 &  4.271669110480  $\pm$  1.147470349207i  &  5.476675105658  &   5.476675105674  \\ 
\bottomrule
\end{tabular}%
% }
\end{table}

\begin{table}[!htbp]
\centering
\caption{The first four eigenvalues obtained by IGA with $p=3$, where $\Omega = (0,1)^2$ and $n = 4$.}
\label{tab:k-case1-p3}
\begin{tabular}{ccccc}
\toprule
$h$ & DOFs & $k_{1,h}\,\, \&\,\, k_{2,h}$  & $k_{3,h}$ & $k_{4,h}$ \\
\midrule
$\frac{\sqrt{2}}{16}$ &  722 &  4.271571871821  $\pm$  1.147502581117i &    5.477217347103  &   5.477217347103  \\ 
$\frac{\sqrt{2}}{32}$ &  2450 &  4.271689020820  $\pm$  1.147437598159i    &  5.476174314713  &   5.476174314713  \\ 
$\frac{\sqrt{2}}{64}$ &  8978 &  4.271696372127  $\pm$  1.147433641270i   &  5.476112644596  &   5.476112644596  \\ 
$\frac{\sqrt{2}}{128}$ &  34322 &  4.271696833273  $\pm$  1.147433391759i   &  5.476108841177  &   5.476108841277  \\ 
$\frac{\sqrt{2}}{256}$ &  134162 &  4.271696861988  $\pm$  1.147433375371i   &  5.476108603885  &   5.476108604012  \\ 
\bottomrule
\end{tabular}%
\end{table}

\begin{figure}[H]%[!htbp]
\subfigure[$p = 2$]{\includegraphics[width=0.48\textwidth ]{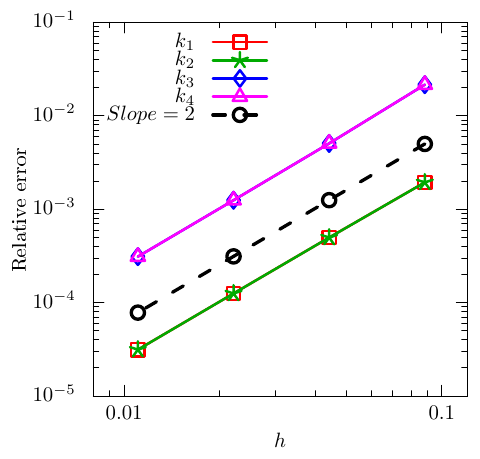}}
\subfigure[$p=3$]{\includegraphics[width=0.48\textwidth]{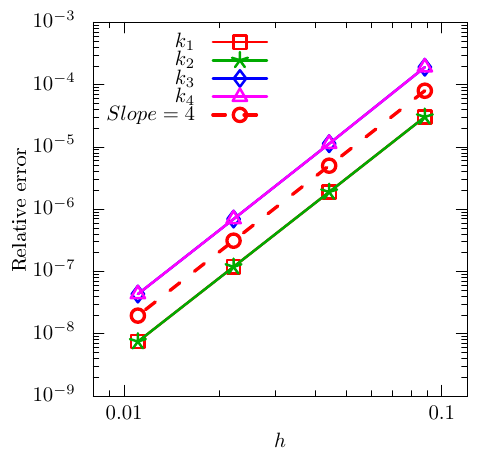}}
\caption{(Example 1) The convergence of the relative error for the first four transmission eigenvalues obtained by IGA with $p=2,3$.  } 
\label{fig:example_1_n_4}
  % \vspace{0.2in}
\end{figure}

\subsection{Example 2: Square domain with variable index of refraction}

We now consider the test case where  $\Omega = (0,1)^2$ and the index of refraction  $n= 8 + x -y$ \cite{xi2020high,HAN201796,ji2014multigrid}. We report the first four transmission eigenvalues obtained by IGA on six uniformly refined meshes with degrees $p=2$ and $p=3$ in Tables \ref{tab:k-case2-p2} and \ref{tab:k-case2-p3}, respectively. Similar to Example 1, it is observed that the computed real eigenvalues are monotonically decreasing as the mesh size $h$ becomes smaller. 

For comparison, in Table \ref{tab:k-case2-p3}, we also list the numerical results reported in \cite{xi2020high}, where a cubic $H^2$  nonconforming finite element scheme was considered. Note that the theoretical convergence order of the error for eigenvalues is 4 for both IGA with $p=3$ and the cubic nonconforming FEM \cite{xi2020high}. The comparison shows that to achieve the same level of accuracy, IGA requires only approximately one-sixth of the degrees of freedom (DOFs) needed by the cubic $H^2$ nonconforming FEM, demonstrating the significant advantage of IGA over the nonconforming FEM  \cite{xi2020high} for the transmission eigenvalue problem.
The convergence of the relative error of the first four transmission eigenvalues with respect to  $h$ is shown in Figure~\ref{fig:example_2_n_8_x_y}, which again confirms that the convergence order of the error for eigenvalues is $2(p-1)$ for $p\ge 2$.  

\begin{table}[!htbp]
\centering
\caption{The first four eigenvalues obtained by IGA with $p=2$,  where $\Omega = (0,1)^2$ and $n = 8+x-y$.}
\label{tab:k-case2-p2}
\begin{tabular}{cccccc}
\toprule
$h$ & DOFs & $k_{1,h}$ & $k_{2,h}$ & $k_{3,h}$ & $k_{4,h}$ \\
\midrule
$\frac{\sqrt{2}}{4}$ &  72 &  3.195370   & 4.271906    &  4.280797  &   4.582237  \\ 
$\frac{\sqrt{2}}{8}$ &  200 &  2.914875   & 3.742378    &  3.742628  &   4.325270  \\ 
$\frac{\sqrt{2}}{16}$ &  648 &  2.844952   & 3.588230    &  3.588502  &   4.167715  \\ 
$\frac{\sqrt{2}}{32}$ &  2312 &  2.827852   & 3.550979    &  3.551268  &   4.130110  \\ 
$\frac{\sqrt{2}}{64}$ &  8712 &  2.823603   & 3.541761    &  3.542054  &   4.120826  \\ 
$\frac{\sqrt{2}}{128}$ &  33800 &  2.822543   & 3.539462    &  3.539757  &   4.118513  \\ 
\bottomrule
\end{tabular}%
\end{table}

\begin{table}[!htbp]
\centering
\caption{The first four eigenvalues obtained by IGA with $p=3$,  where $\Omega = (0,1)^2$ and $n = 8+x-y$.}
\label{tab:k-case2-p3}
\begin{tabular}{cccccc}
\toprule
$h$ & DOFs & $k_{1,h}$ & $k_{2,h}$ & $k_{3,h}$ & $k_{4,h}$ \\
\midrule
$\frac{\sqrt{2}}{4}$ &  98 &  2.853625   & 3.657961    &  3.658273  &   4.206656  \\ 
$\frac{\sqrt{2}}{8}$ &  242 &  2.823687   & 3.543948    &  3.544239  &   4.122086  \\ 
$\frac{\sqrt{2}}{16}$ &  722 &  2.822275   & 3.538972    &  3.539267  &   4.117981  \\ 
$\frac{\sqrt{2}}{32}$ &  2450 &  2.822195   & 3.538713    &  3.539008  &   4.117756  \\ 
$\frac{\sqrt{2}}{64}$ &  8978 &  2.822190   & 3.538698    &  3.538993  &   4.117743  \\ 
$\frac{\sqrt{2}}{128}$ &  \textbf{34322} &  \textbf{2.822189}   & \textbf{3.538697}    &  \textbf{3.538992}  &   \textbf{4.117742}  \\ 
\midrule
\midrule
\cite{xi2020high} &  \textbf{194566} & \textbf{2.822189} & \textbf{3.538697} & \textbf{3.538992} & \textbf{4.117742} \\
\bottomrule
\end{tabular}%
\end{table}

\begin{figure}[!htbp]
\subfigure[$p = 2$]{\includegraphics[width=0.48\textwidth ]{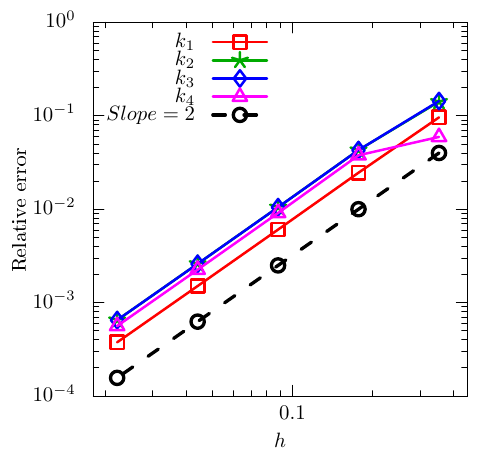}}
\subfigure[$p=3$]{\includegraphics[width=0.48\textwidth]{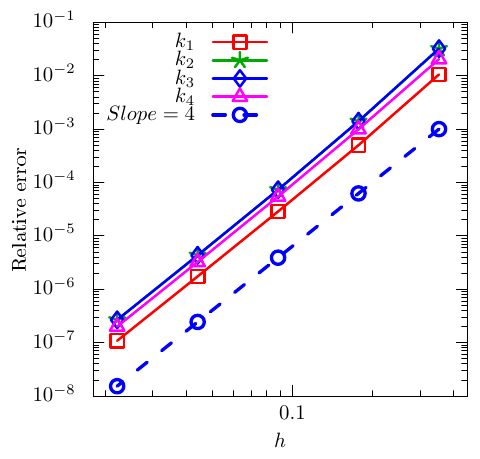}}
\caption{(Example 2) The convergence of the relative error for the first four transmission eigenvalues obtained by IGA with $p=2,3$.   } 
\label{fig:example_2_n_8_x_y}
  % \vspace{0.2in}
\end{figure}
% \FloatBarrier

\subsection{Example 3: Circular domain with variable index of refraction}

In this test case, we consider the disk-shaped domain $\Omega = \{ \bm x \in \mathbb{R}^2: |\bm x| < \displaystyle 1/2 \}$, and we show the initial mesh in Figure~\ref{fig:Disk geometry}. Following \cite{yang2016mixed}, we set the  index of refraction as $ n(\bm x) = 8 + 4|\bm x|$. We present the seven lowest  transmission eigenvalues obtained by IGA with degrees $p=2$ and $p=3$ in Tables \ref{tab:k-Example_3-p2} and \ref{tab:k-Example_3-p3}, respectively, where the mesh size $h$ is defined in \eqref{eq:mesh_size_h}. The results match well with those achieved in \cite{yang2016mixed}.
% Similar to the previous examples, the computed real eigenvalues are observed to decrease monotonically as $h$ decreases. 

\begin{figure}[!htbp]
    \centering
    \includegraphics[width=0.45\linewidth]{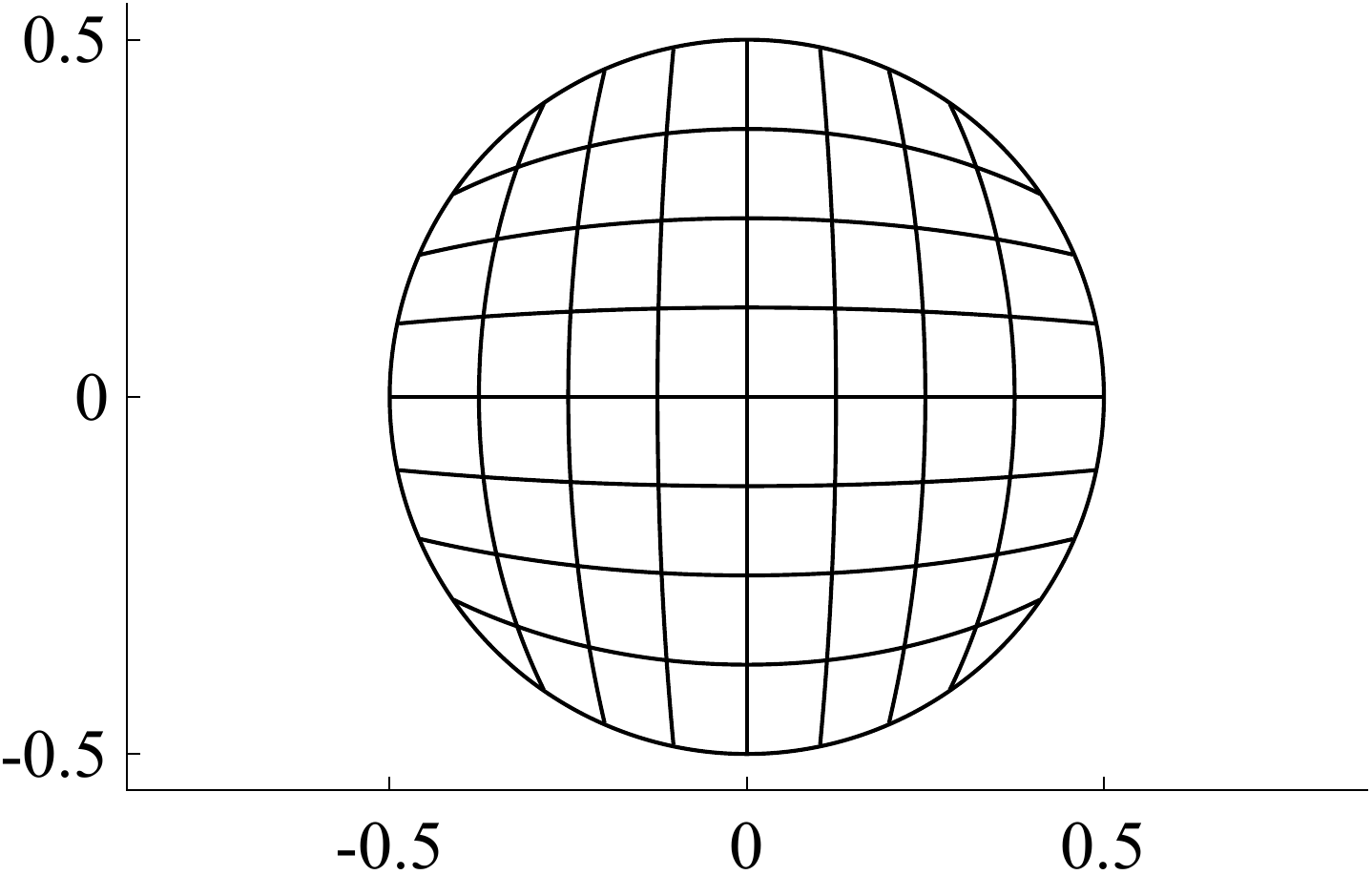}
    \caption{The initial physical mesh for a disk-shaped domain, consisting of $8\times 8$ elements.}
    \label{fig:Disk geometry}
\end{figure}

For comparison, in Table \ref{tab:k-Example_3-p3}, we also list the numerical results computed by the  mixed FEM using $\mathbb{P}_3$ element
 \cite{yang2016mixed}. Note that the theoretical convergence order of the error for eigenvalues is 4 for both IGA with $p=3$ and mixed FEM with the $\mathbb{P}_3$ element \cite{yang2016mixed}. The comparison shows that to achieve the same accuracy, the number of DOFs needed by IGA is around one hundredth of that used by the mixed FEM \cite{yang2016mixed}, demonstrating the significant advantage of IGA for the transmission eigenvalue problem in the curved domain.
The convergence of the relative error of the first  four different transmission eigenvalues ($k_i$, $i=1,2,4,6$) with respect to $h$ is displayed in Figure~\ref{fig:example_3_Disk}, which verifies the theoretical result.

\begin{table}[!htbp]
\centering
\caption{The transmission eigenvalues obtained by IGA with $p=2$,  where $\Omega = \{\bm x \in \mathbb{R}^2: |\bm x| < 1/2\} $ and $n(\bm x)= 8+4|\bm x|$.}
\label{tab:k-Example_3-p2}
\begin{tabular}{cccccc}
\toprule
$h$ & DOFs & $k_{1,h}$ & $k_{2,h}\approx k_{3,h}$ & $k_{4,h}\approx k_{5,h}$ & $k_{6,7,h}$ \\
\midrule
0.17677653 &  200 &  2.8318830   & 3.6979880    &  4.4941612  &   4.9153594  $\pm$  0.7880379i\\ 
0.08838833 &  648 &  2.7773889   & 3.5688046    &  4.3538115  &   4.8071479  $\pm$  0.8056438i\\ 
0.04419417 &  2312 &  2.7639086   & 3.5375744    &  4.3193552  &   4.7859802 $\pm$ 0.7999450i\\ 
0.02209709 &  8712 &  2.7605525   & 3.5298453    &  4.3108131  &   4.7812128 $\pm$ 0.7979725i\\ 
0.01104854 &  33800 &  2.7597144   & 3.5279181    &  4.3086824  &   4.7800569 $\pm$ 0.7974469i\\ 
\bottomrule
\end{tabular}%
\end{table}

\begin{table}[!htbp]
\centering
\caption{The transmission eigenvalues obtained by IGA with $p=3$,  where $\Omega = \{\bm x \in \mathbb{R}^2: |\bm x| < 1/2\} $ and $n(\bm x)= 8+4|\bm x|$.}
\label{tab:k-Example_3-p3}
\begin{tabular}{cccccc}
\toprule
$h$ & DOFs & $k_{1,h}$ & $k_{2,h}\approx k_{3,h}$ & $k_{4,h}\approx k_{5,h}$  & $k_{6,7,h}$ \\
\midrule
0.17677642 &  242 &  2.7608429   & 3.5324539    &  4.3160473  &   4.7847204 $\pm$ 0.8087511i\\ 
0.08838831 &  722 &  2.7595139   & 3.5275418    &  4.3083710  &   4.7798696 $\pm$ 0.7977539i\\ 
0.04419417 &  2450 &  2.7594399   & 3.5272919    &  4.3079959  &   4.7796854 $\pm$ 0.7972967i\\ 
0.02209709 &  \textbf{8978} &  \textbf{2.7594354} & \textbf{3.5272771}  &  \textbf{4.3079741}  &   \textbf{4.7796755 $\pm$ 0.7972705i} \\ 
0.01104854 &  34322 &  2.7594352   & 3.5272762    &  4.3079727  &   4.7796749 $\pm$ 0.7972689i \\ 
\midrule
\midrule
\cite{yang2016mixed} & \textbf{869487} & \textbf{2.7594400} & \textbf{3.5272823} & \textbf{4.3079799} & \textbf{4.7796829 $\pm$ 0.7972703i} \\
\bottomrule
\end{tabular}%
\end{table}

\begin{figure}[!htbp]
\subfigure[$p = 2$]{\includegraphics[width=0.48\textwidth ]{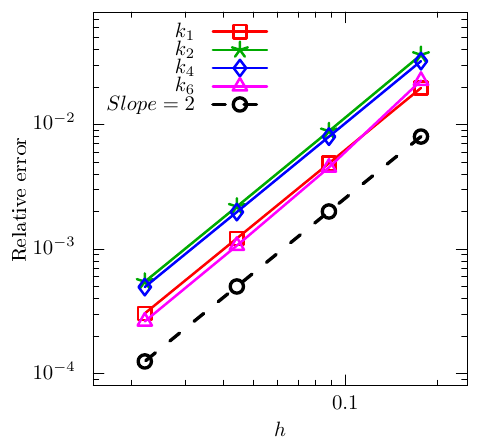}}
\subfigure[$p=3$]{\includegraphics[width=0.48\textwidth]{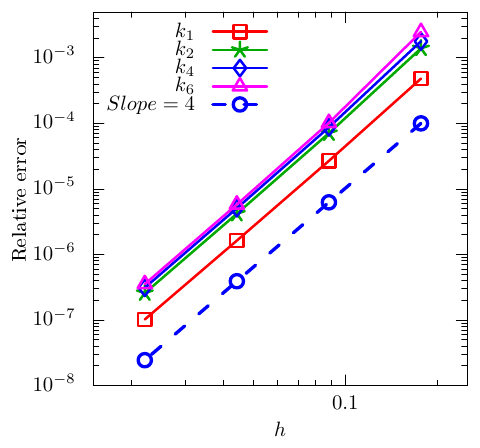}}
\caption{(Example 3) The convergence of the relative error of $k_i$ for $i=1,2,4,6$ with respect to $h$.  } 
\label{fig:example_3_Disk}
  % \vspace{0.2in}
\end{figure}

\FloatBarrier

\subsection{Example 4: L-shaped domain with constant index of refraction}

In this test case, following \cite{HAN201796,YANG2020112697}, we consider the L-shaped domain $\Omega_L := (-1,1)^2\backslash \big([0,1]\times [-1,0]\big)$, and show the initial mesh in Figure~\ref{fig:L-shaped domain geometry}. We set the index of refraction to $n=16$. The first four transmission eigenvalues achieved by IGA with $p=3$ on the finest mesh are 
\[
k_{1,h}\approx 1.4765675141, \,\, k_{2,h}\approx  1.5697294654,\,\, k_{3,h}\approx   1.7051723233, \mbox{ and} \,\, k_{4,h}\approx   1.7831163324,
\]
which match well with those reported in \cite{HAN201796,YANG2020112697}. Since the L-shaped domain has a re-entrant corner, the eigenfunctions may have low regularity. Consequently, the convergence order of the eigenvalue approximation may be suboptimal due to the geometric singularity, as displayed in Figure~\ref{fig:Example_4_L_shape_n_16}, where IGA with degree $p=2$ is used to obtain the results. Note that the solution to the biharmonic equation in the L-shaped domain $\Omega_L$ is in $H^{2+0.544}(\Omega_L)$ \cite{Banz2017}, which explains why the convergence rate for the error in the first transmission eigenvalue is only around 1.

\begin{figure}[h!]
    \centering
    \includegraphics[width=0.45\linewidth]{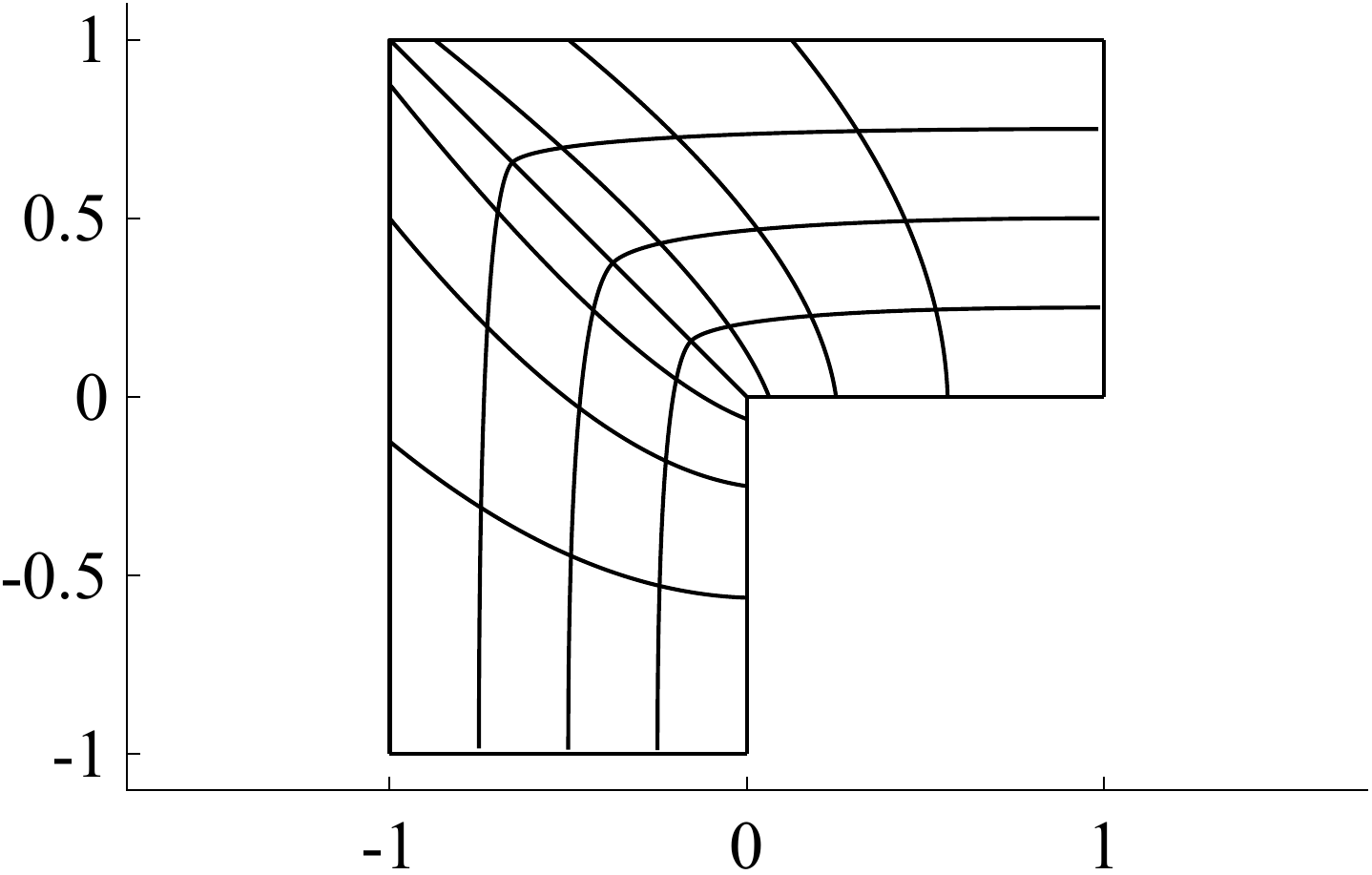}
    \caption{The initial physical mesh for the L-shaped domain, consisting of $8\times 4$ elements.}
    \label{fig:L-shaped domain geometry}
\end{figure}

\begin{figure}[h!]
\centering
 {\includegraphics[width=0.5\textwidth]{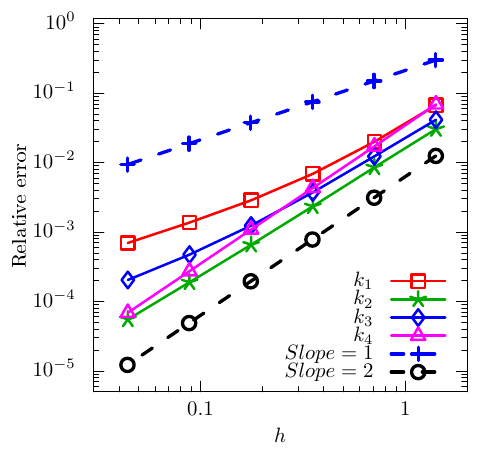}}
\caption{(Example 4) The convergence of the relative error of $k_i$ for $1\le i \le 4$ with respect to $h$ by IGA with $p=2$.  } 
\label{fig:Example_4_L_shape_n_16}
  % \vspace{0.2in}
\end{figure}

\FloatBarrier
\subsection{Example 5: Cube domain with  constant index of refraction}

In this example, we consider the cube domain  $\displaystyle \Omega = (- \frac{1}{2},\frac{1}{2})^3 $ and  constant index of refraction  $ n = 16$ \cite{yang2016mixed}. We present the four different lowest transmission eigenvalues achieved by IGA with degrees $p=2$ and $p=3$ in Tables \ref{tab:k-Example_4-p2} and \ref{tab:k-Example_4-p3}, respectively.  In Table \ref{tab:k-Example_4-p3}, we also show the results achieved by a mixed FEM using $\mathbb{P}_3$ element \cite{yang2016mixed}. Note that the theoretical convergence order of the error for eigenvalues is 4 for both IGA with $p=3$ and the mixed FEM with  $\mathbb{P}_3$ element. The comparison shows that to achieve the same accuracy, IGA with  $p=3$ requires approximately one-third of the DOFs needed by the mixed FEM with $\mathbb{P}_3$ element, demonstrating the advantage of IGA for the transmission eigenvalue problem.
The convergence of the relative error of the  transmission eigenvalues $k_i$ ($i=1,2,5, 8$) with respect to  $h$ is shown in Figure~\ref{fig:example_4_n_16}, which again confirms that the numerical convergence rate agrees well with the theoretical  results.

\begin{table}[!htbp]
\centering
\caption{The transmission eigenvalues obtained by IGA with $p=2$,  where $\Omega = (- \frac{1}{2},\frac{1}{2})^3 $ and $n=16$.}
\label{tab:k-Example_4-p2}
\begin{tabular}{cccccc}
\toprule
$h$ & DOFs & $k_{1,h}$ & $k_{2,h}\approx k_{3,h} \approx k_{4,h}$ & $k_{5,h}\approx k_{6,h} \approx k_{7,h}$ & $k_{8,h}$ \\
\midrule
$\frac{\sqrt{3}}{4}$  &  432    &  2.2430166   & 3.0275743    &  3.5443142  &   3.9767014  \\ 
$\frac{\sqrt{3}}{8}$  &  2000   &  2.1091323   & 2.6913663    &  3.1089618  &   3.4630555  \\ 
$\frac{\sqrt{3}}{16}$ &  11664  &  2.0775851   & 2.6106829    &  3.0162720  &   3.3154250  \\ 
$\frac{\sqrt{3}}{32}$ &  78608  &  2.0698098   & 2.5912625    &  2.9942761  &   3.2635522  \\ 
$\frac{\sqrt{3}}{64}$ &  574992 &  2.0678728   & 2.5864550    &  2.9888468  &   3.2508004  \\ 
\bottomrule
\end{tabular}%
\end{table}

\begin{table}[!htbp]
\centering
\caption{The transmission eigenvalues obtained by IGA with $p=3$,  where $\Omega = (- \frac{1}{2},\frac{1}{2})^3 $ and $n=16$.}
\label{tab:k-Example_4-p3}
\begin{tabular}{cccccc}
\toprule
$h$ & DOFs & $k_{1,h}$ & $k_{2,h}\approx k_{3,h} \approx k_{4,h}$ & $k_{5,h}\approx k_{6,h} \approx k_{7,h}$ & $k_{8,h}$ \\
\midrule
$\frac{\sqrt{3}}{4}$  &  686 &  2.0741996   & 2.6284321    &  3.0261084  &   3.2719371  \\ 
$\frac{\sqrt{3}}{8}$  &  2662 &  2.0676856   & 2.5869767    &  2.9890746  &   3.2595879  \\ 
$\frac{\sqrt{3}}{16}$ &  13718 &  2.0672561   & 2.5849735    &  2.9871645  &   3.2471818  \\ 
$\frac{\sqrt{3}}{32}$ &  \textbf{85750} &  \textbf{2.0672294}   & \textbf{2.5848637}    &  \textbf{2.9870505}  &   \textbf{3.2466048}  \\ 
$\frac{\sqrt{3}}{64}$ &  601526 &  2.0672278   & 2.5848572    &  2.9870436  &   3.2465719  \\ 
\midrule
\midrule
\cite{yang2016mixed}  &  \textbf{218453} &  \textbf{2.0672329}&  \textbf{2.5848674} &  \textbf{2.9870655} &  \textbf{3.2465923}\\
\bottomrule
\end{tabular}%
\end{table}

\begin{figure}[h!]
\subfigure[$p = 2$]{\includegraphics[width=0.48\textwidth ]{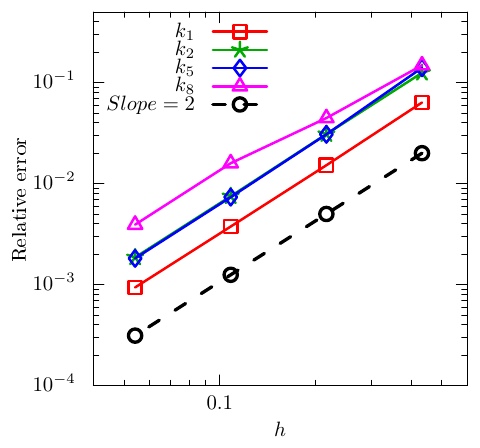}}
\subfigure[$p=3$]{\includegraphics[width=0.48\textwidth]{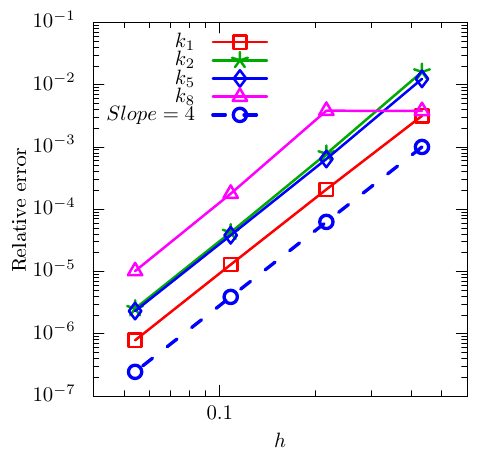}}
\caption{(Example 5) The convergence of the relative error of $k_i$ for $i=1,2,5,8$ with respect to  $h$.  } 
\label{fig:example_4_n_16}
  % \vspace{0.2in}
\end{figure}

\subsection{Example 6: A quarter of a hollow cylinder with variable index of refraction}

In our last numerical example, the index of refraction is set to be $n(\bm x) = 8 + |\bm x|$, for $\bm x\in \Omega_c\subset \mathbb{R}^3$, where $\Omega_c$ is a quarter of a hollow cylinder (a \textit{non-convex} domain) with inner radius $r = 1/2$ and outer radius $R=1$, which is given by 
\[
\Omega_c = \{\bm x = (x,y,z)\in \mathbb{R}^3: r^2 <  x^2 + y^2 < R^2,~  0 < x,y < R,   ~ 0 < z < 1 \}.
\]
The initial mesh is shown in Figure~\ref{fig:quarter_3D}.
We present the four lowest transmission eigenvalues achieved by IGA with degrees $p=2$ and $p=3$ in Tables \ref{tab:k-Example_6-p2} and \ref{tab:k-Example_6-p3}, respectively. 
%Similar to the previous examples, it is observed that he computed real eigenvalues are monotonically decreasing as  $h$ decreases. 
The convergence of the relative errors in the first four transmission eigenvalues with respect to $h$ is presented in Figure~\ref{fig:example_6_n_8_abs_x}, which again confirms our theoretical results.  

\begin{figure}[h!]
    \centering
\includegraphics[width=0.25\linewidth]{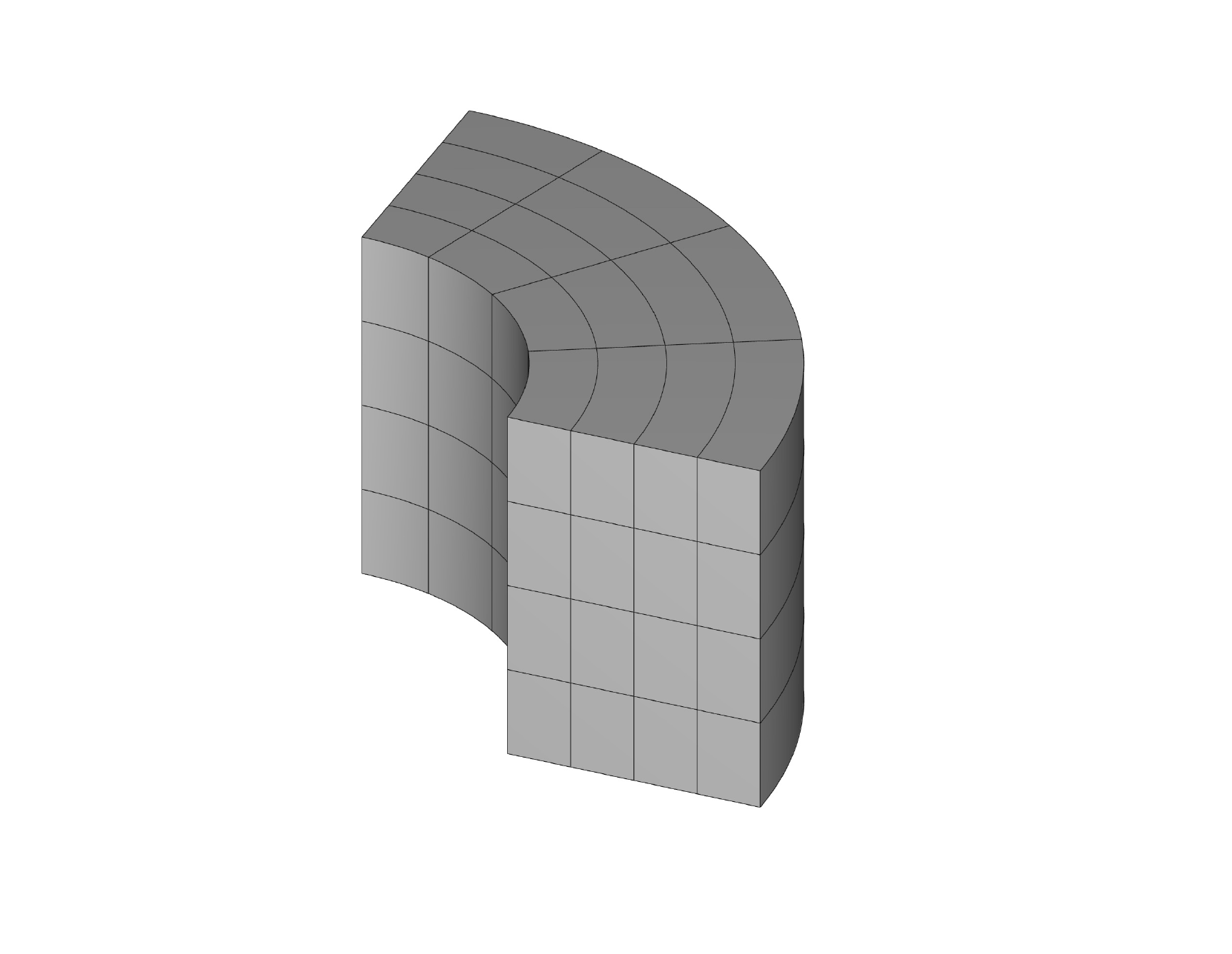}
\caption{The initial physical mesh for a quarter of a hollow cylinder, consisting of $4\times 4 \times 4$ elements.}
\label{fig:quarter_3D}
\end{figure}

\begin{table}[!htbp]
\centering
\caption{The transmission eigenvalues obtained by IGA with $p=2$,  where $\Omega$ is  a quarter of a hollow cylinder, and $n = 8+|\bm x|$.}
\label{tab:k-Example_6-p2}
\begin{tabular}{cccccc}
\toprule
$h$ & DOFs & $k_{1,h}$ & $k_{2,h}$ & $k_{3,h}$ & $k_{4,h}$ \\
\midrule
0.86501356 &  432 &  5.0627015774   & 5.1656498086    &  5.2164994319  &   5.4563280452  \\ 
0.43276146 &  2000 &  4.6691092793   & 4.6828864592    &  4.7786045727  &   4.9360538233  \\ 
0.21644376 &  11664 &  4.5634713523   & 4.5710408617    &  4.6658009247  &   4.8212704642  \\ 
0.10823755 &  78608 &  4.5370806175   & 4.5438308964    &  4.6381796484  &   4.7934633734  \\ 
0.05412269 &  574992 &  4.5304922785   & 4.5370766167    &  4.6313154356  &   4.7865649830  \\ 
\bottomrule
\end{tabular}%
\end{table}

\begin{table}[htbp]
\centering
\caption{The transmission eigenvalues obtained by IGA with $p=3$,  where $\Omega$ is a quarter of a hollow cylinder, and $n=8+|\bm x|$.}
\label{tab:k-Example_6-p3}
\begin{tabular}{cccccc}
\toprule
$h$ & DOFs & $k_{1,h}$ & $k_{2,h}$ & $k_{3,h}$ & $k_{4,h}$ \\
\midrule
0.86501356 &  686 &  4.5542922587   & 4.5938175494    &  4.6884799378  &   4.8236724817  \\ 
0.43276146 &  2662 &  4.5324679808   & 4.5373202838    &  4.6322289391  &   4.7876631340  \\ 
0.21644376 &  13718 &  4.5285371154   & 4.5349713066    &  4.6292236382  &   4.7844959679  \\ 
0.10823755 &  85750 &  4.5283116933   & 4.5348380361    &  4.6290428348  &   4.7842844932  \\ 
0.05412269 &  601526 &  4.5282980419   & 4.5348299897    &  4.6290318376  &   4.7842710928  \\ 
\bottomrule
\end{tabular}%
\end{table}

\begin{figure}[h!]
\subfigure[$p = 2$]{\includegraphics[width=0.48\textwidth ]{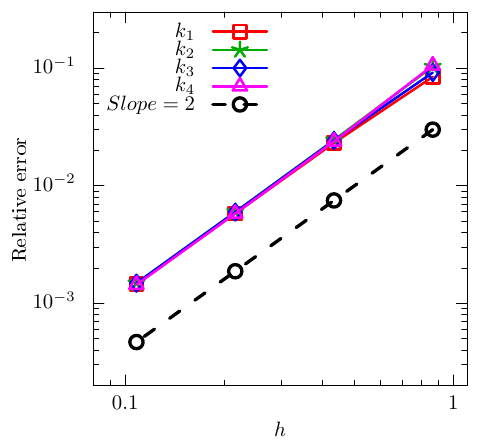}}
\subfigure[$p=3$]{\includegraphics[width=0.48\textwidth]{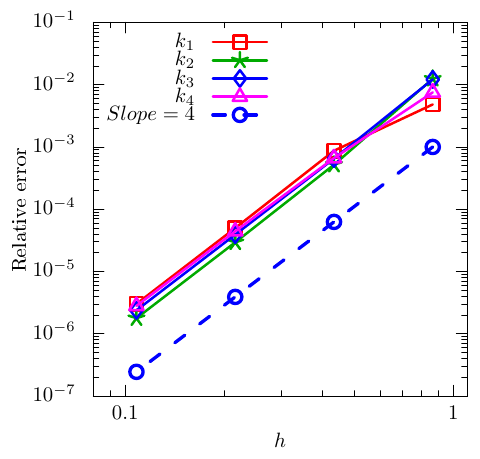}}
\caption{(Example 6) The convergence of relative errors in the first four transmission eigenvalues by IGA with $p=2,3$. } 
\label{fig:example_6_n_8_abs_x}
  % \vspace{0.2in}
\end{figure}

\section{Conclusion}
\label{sec-conclusion} 
\setcounter{equation}{0}
\setcounter{figure}{0}

In this paper, we introduce and rigorously analyze a $H^2$-conforming  geometry independent field approximation (GIFT) scheme, which is a generalization of the classical NURBS-based isogeometric analysis (IGA), for solving the fourth-order non-self-adjoint eigenvalue problem arising from Helmholtz transmission eigenvalue problem in  both two- and three-dimensional curved domains. By reformulating the quadratic fourth-order non-self-adjoint eigenvalue problem into an equivalent linear variational formulation defined on $\mathbb{H}=H_0^2(\Omega)\times L^2(\Omega)$, we establish the theoretical framework for the optimal error estimates of the transmission eigenvalues and eigenfunctions.

The main ingredients for our proof include (1) the spectral approximation theory for the compact non-self-adjoint operators, (2) B-spline approximation in the physical domain,  and (3) the Banach-space interpolation theory. Our theoretical results show that the eigenvalues and eigenfunctions converge with  $O(h^{2\sigma})$ and  $O(h^\sigma)$, respectively, where $\sigma\in(0,1]$ is the regularity parameter. Under higher regularity assumption, improved convergence rates for eigenvalues and eigenfunctions can be established. A variety of two- and three-dimensional numerical examples are provided to  confirm the theoretical results, and to show the advantages of the proposed scheme over the existing numerical methods in terms of numerical accuracy and geometric flexibility.

% Compared to existing methods such as mixed FEM and nonconforming FEM, our approach demonstrates significant advantages: exact geometry representation via NURBS, straightforward $H^2$-conforming discretization using smooth B-splines, and substantial reductions in degrees of freedom for comparable accuracy. These results establish IGA as a powerful and geometrically flexible tool for transmission eigenvalue problems.

% Future work includes extension to anisotropic media, development of adaptive refinement strategies for singular geometries, and integration with inverse scattering algorithms for practical parameter identification.

% \section*{Acknowledgement}
% The work of X. Meng was partially supported by the National Natural
% Science Foundation of China (No. 12101057), Guangdong Higher Education Upgrading Plan (UIC-R0400024-21), and Guangdong and Hong Kong Universities “1+1+1” Joint Research Collaboration Scheme (No. 2025A0505000014).
% The work of G. Hu was supported by the Science and Technology Development Fund, Macao SAR
% (No. 0068/2024/RIA1, 001/2024/SKL), National Natural Science Foundation of China (No. 11922120), and MYRG of University of Macau (No.
% MYRG-CRG2024-00042-FST).

\FloatBarrier
\bibliographystyle{siam}
\bibliography{ref}

\end{document}